\newcommand{\Mod}[1]{\ (\textup{mod}\ #1)}
\theoremstyle{plain} 
\newtheorem{theorem}{\indent\sc Theorem}[section]
\newtheorem{lemma}[theorem]{\indent\sc Lemma}
\newtheorem{proposition}[theorem]{\indent\sc Proposition}
\theoremstyle{definition} 
\newtheorem{definition}[theorem]{\indent\sc Definition}
\newtheorem{remark}[theorem]{\indent\sc Remark}
\newtheorem{example}[theorem]{\indent\sc Example}
\def\address#1#2{\begingroup
\noindent\parbox[t]{7.8cm}{%
\small{\scshape\ignorespaces#1}\par\vskip1ex
\noindent\small{\itshape E-mail address}%
\/: #2\par\vskip4ex}\hfill%
\endgroup}%
\title{Binary quadratic forms and ray class groups} 
\author{
\textsc{Ick Sun Eum, Ja Kyung Koo and Dong Hwa Shin} 
}
\date{} 
\begin{document}

\allowdisplaybreaks

\maketitle

\footnote{ 
2010 \textit{Mathematics Subject Classification}. Primary 11R29; Secondary 11E16, 11G15, 11R37.}
\footnote{ 
\textit{Key words and phrases}. binary quadratic forms, class field theory, class groups, complex multiplication.} \footnote{
\thanks{
The first author was supported by the Dongguk University Research Fund of 2016 and the National Research Foundation of Korea (NRF) grant funded by the Korea government (MSIT) (NRF-2017R1C1B5017567).
The third (corresponding) author was supported by the
Hankuk University of Foreign Studies Research Fund of 2018 and
the National Research Foundation of Korea (NRF) grant funded by the Korea
government (MSIT) (NRF-2017R1A2B1006578).} }

\begin{abstract}
Let $K$ be an imaginary quadratic field different from $\mathbb{Q}(\sqrt{-1})$ and $\mathbb{Q}(\sqrt{-3})$. For
a positive integer $N$, let $K_\mathfrak{n}$ be the ray class field of $K$ modulo $\mathfrak{n}=N\mathcal{O}_K$.
By using the congruence subgroup $\pm\Gamma_1(N)$, we construct an extended form class group whose operation is basically the Dirichlet composition, and explicitly show that this group
is isomorphic to the Galois group $\mathrm{Gal}(K_\mathfrak{n}/K)$. We also present algorithms to find all form classes and show how to multiply two form classes.
As an application,
we describe $\mathrm{Gal}(K_\mathfrak{n}^\mathrm{ab}/K)$ in terms of these
extended form class groups for which
$K_\mathfrak{n}^\mathrm{ab}$ is the maximal abelian extension of $K$
unramified outside prime ideals dividing $\mathfrak{n}$.
\end{abstract}

\maketitle

\section {Introduction}

Let $K$ be an imaginary quadratic field of discriminant $d_K$ with ring of integers $\mathcal{O}_K$.
Let $\mathcal{Q}(d_K)$ be the set
of primitive positive definite binary quadratic forms
$Q(x,\,y)=ax^2+bxy+cy^2$ ($\in\mathbb{Z}[x,\,y]$)
of discriminant $b^2-4ac=d_K$.
Define an equivalence relation on $\mathcal{Q}(d_K)$, called
the \textit{proper equivalence}, by
\begin{equation*}
Q'\sim Q\quad\Longleftrightarrow\quad
Q'\left(\begin{bmatrix}x\\y\end{bmatrix}\right)=
Q\left(\sigma\begin{bmatrix}x\\y\end{bmatrix}\right)
~\textrm{for some}~\sigma\in\mathrm{SL}_2(\mathbb{Z}).
\end{equation*}
Then, the set $\mathrm{C}(d_K)=\mathcal{Q}(d_K)/\sim$ of equivalence classes under Dirichlet composition
becomes a group, called the \textit{form class group}
of discriminant $d_K$ (\cite[Theorem 3.9]{Cox}).
\par
Let $I_K$ be the group
of fractional ideals of $K$ and $P_K$ be
its subgroup of principal fractional ideals.
It is a classical fact that the form class group $\mathrm{C}(d_K)$
is isomorphic to the ideal class group $\mathrm{Cl}(\mathcal{O}_K)=I_K/P_K$ as follows:
For each $Q\in\mathcal{Q}(d_K)$, let
$\omega_Q$ be the zero of $Q(x,\,1)$ in
the complex upper half-plane
$\mathbb{H}$.

\begin{theorem}\label{modulo1}
We have an isomorphism of groups
\begin{eqnarray*}
\phi~:~
\mathrm{C}(d_K)&\rightarrow&\mathrm{Cl}(\mathcal{O}_K)\\
\textrm{form class containing}~Q=
ax^2+bxy+cy^2
&\mapsto&\textrm{ideal class containing}~a[\omega_Q,\,1].
\end{eqnarray*}
\end{theorem}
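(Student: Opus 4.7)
The plan is to verify that the map $\phi$ is a well-defined, bijective group homomorphism by a sequence of concrete lattice computations. First I would confirm that for each $Q = ax^2 + bxy + cy^2 \in \mathcal{Q}(d_K)$ the lattice $a[\omega_Q,\,1] = \bigl[a,\,\tfrac{-b+\sqrt{d_K}}{2}\bigr]$ is actually a proper fractional $\mathcal{O}_K$-ideal. Since $\mathcal{O}_K = \mathbb{Z}\bigl[\tfrac{d_K+\sqrt{d_K}}{2}\bigr]$, this reduces to checking that $\tfrac{-b+\sqrt{d_K}}{2}$ has trace $-b \in \mathbb{Z}$ and norm $ac \in \mathbb{Z}$, which is immediate from $b^2 - 4ac = d_K$.

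Next I would show that $\phi$ descends to proper-equivalence classes. If $\sigma \in \mathrm{SL}_2(\mathbb{Z})$ carries $Q$ to $Q'$, a short calculation identifies $\omega_Q$ as the image of $\omega_{Q'}$ under the M\"obius action of $\sigma$; comparing $\mathbb{Z}$-bases shows that $a'[\omega_{Q'},\,1]$ differs from $a[\omega_Q,\,1]$ by an element of $K^\times$, so the two lattices define the same ideal class in $\mathrm{Cl}(\mathcal{O}_K)$.

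The technical heart of the argument is multiplicativity. Using Dirichlet's normalization procedure I would replace $Q_1, Q_2$ by properly equivalent concordant forms $a_1 x^2 + Bxy + a_2 C y^2$ and $a_2 x^2 + Bxy + a_1 C y^2$ with $\gcd(a_1, a_2, B) = 1$ and $B^2 \equiv d_K \pmod{4a_1 a_2}$, so that their Dirichlet composition is $a_1 a_2 x^2 + Bxy + C y^2$. Setting $\omega = \tfrac{-B+\sqrt{d_K}}{2}$, I would expand the product of ideals $[a_1, \omega][a_2, \omega]$, use the minimal-polynomial relation $\omega^2 = -B\omega - a_1 a_2 C$ to eliminate $\omega^2$, and invoke $\gcd(a_1, a_2, B) = 1$ to extract $\omega$ as an integer combination of the generators; this yields exactly $[a_1 a_2, \omega]$, which is the ideal attached to the composed form. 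I expect the bookkeeping involved in putting two arbitrary forms into compatible concordant shape to be the principal obstacle.

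Finally, for bijectivity I would exhibit an explicit inverse: to a fractional ideal $\mathfrak{a}$ with an oriented $\mathbb{Z}$-basis $[\alpha, \beta]$ (normalized so that $\beta/\alpha \in \mathbb{H}$) I attach the form $Q_\mathfrak{a}(x,y) = N_{K/\mathbb{Q}}(\alpha x + \beta y)/N(\mathfrak{a})$, then verify that $Q_\mathfrak{a} \in \mathcal{Q}(d_K)$, that changing the oriented basis by $\mathrm{SL}_2(\mathbb{Z})$ changes $Q_\mathfrak{a}$ by proper equivalence, and that scaling $\mathfrak{a}$ by any $\lambda \in K^\times$ leaves the form class unchanged. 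A direct computation on the chosen normal-form basis $\bigl[a, \tfrac{-b+\sqrt{d_K}}{2}\bigr]$ shows this construction is two-sided inverse to $\phi$, completing the proof.
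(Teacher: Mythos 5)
The paper gives no argument of its own here---its ``proof'' is a citation to Cox, Theorem 7.7---and your outline is precisely that classical argument: check the lattice is an $\mathcal{O}_K$-ideal, descend through $\mathrm{SL}_2(\mathbb{Z})$, prove multiplicativity via concordant forms and the identity $[a_1,\omega][a_2,\omega]=[a_1a_2,\omega]$, and invert by attaching a norm form to an oriented basis. So the approach matches the intended proof, and the multiplicativity step (the genuine heart of the matter) is set up correctly.

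One concrete slip in the last step: with your normalization $\beta/\alpha\in\mathbb{H}$, applying $Q_\mathfrak{a}(x,y)=\mathrm{N}_{K/\mathbb{Q}}(\alpha x+\beta y)/\mathrm{N}_{K/\mathbb{Q}}(\mathfrak{a})$ to $\mathfrak{a}=a[\omega_Q,1]=\bigl[a,\tfrac{-b+\sqrt{d_K}}{2}\bigr]$ yields $ax^2-bxy+cy^2$, the \emph{opposite} form, whose class is the inverse of $[Q]$; so as stated your construction is $\phi^{-1}$ composed with class inversion, not a two-sided inverse of $\phi$. The promised ``direct computation on the normal-form basis'' would expose this, and the fix is to use $\mathrm{N}_{K/\mathbb{Q}}(\alpha x-\beta y)/\mathrm{N}_{K/\mathbb{Q}}(\mathfrak{a})$ (Cox's convention); with that sign the rest of the outline goes through.
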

\begin{proof}
See \cite[Theorem 7.7]{Cox}.
\end{proof}

\begin{remark}\label{aRemark}
In Theorem \ref{modulo1}, one can replace the integral ideal $a[\omega_Q,\,1]=[(-b+\sqrt{d_K})/2,\,a]$ by
the fractional ideal $[\omega_Q,\,1]$.
\end{remark}

On the other hand, let $H_K$ be the Hilbert class field of $K$
whose Galois group is isomorphic
to $\mathrm{Cl}(\mathcal{O}_K)$ (\cite[Theorem 8.10]{Cox} or \cite[Theorem 9.9 in Chapter V]{Janusz}).
The following theorem is a consequence of the theory of complex multiplication and Theorem \ref{modulo1}.

\begin{theorem}\label{Hilbert}
We have an isomorphism of groups
\begin{eqnarray*}
\mathrm{C}(d_K)&\rightarrow&\mathrm{Gal}(H_K/K)\\
\textrm{form class containing $Q$}&\mapsto&\left(j(\tau_K)\mapsto j(\omega_Q)\right),
\end{eqnarray*}
where $j(\tau)$ is the elliptic modular function
and $\tau_K$ is an element of $\mathbb{H}$
such that $\mathcal{O}_K=[\tau_K,\,1]$.
\end{theorem}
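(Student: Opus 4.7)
The plan is to derive Theorem \ref{Hilbert} by composing the form-to-ideal isomorphism $\phi$ of Theorem \ref{modulo1} with the Artin isomorphism supplied by the main theorem of complex multiplication, and then explicitly tracking the resulting action on the generator $j(\tau_K)$.

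First I would invoke the main theorem of complex multiplication (see \cite[Chapter 11]{Cox}): the Hilbert class field satisfies $H_K = K(j(\tau_K))$, the elliptic modular function $j$ takes the same value on lattices (equivalently, on proper fractional $\mathcal{O}_K$-ideals) belonging to the same ideal class, and the Artin reciprocity map induces a group isomorphism $\psi : \mathrm{Cl}(\mathcal{O}_K) \to \mathrm{Gal}(H_K/K)$ together with an explicit formula for how each $\psi([\mathfrak{a}])$ acts on $j$-invariants. In particular, the Galois orbit of $j(\tau_K) = j(\mathcal{O}_K)$ consists exactly of the conjugate values $\{j(\mathfrak{c}) : [\mathfrak{c}] \in \mathrm{Cl}(\mathcal{O}_K)\}$, each occurring with multiplicity one.

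Next, for a form $Q = ax^2 + bxy + cy^2 \in \mathcal{Q}(d_K)$, Theorem \ref{modulo1} combined with Remark \ref{aRemark} identifies $\phi([Q])$ with the ideal class of the fractional ideal $[\omega_Q,\,1]$. Composing with $\psi$ (possibly after inverting, to align conventions), the image of $[Q]$ in $\mathrm{Gal}(H_K/K)$ is then the unique automorphism $\sigma$ satisfying
\[
\sigma(j(\tau_K)) = j([\omega_Q,\,1]) = j(\omega_Q),
\]
which is exactly the map in the statement. As a composition of two group isomorphisms, it is itself a group isomorphism.

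The main obstacle will be to settle the directional conventions: which of $\mathfrak{a}$ and $\mathfrak{a}^{-1}$ enters the Artin symbol in the standard formulation of the main theorem of CM, and correspondingly whether to use $\phi$ or $\phi^{-1}$ in the composition so that the resulting formula is genuinely $j(\tau_K) \mapsto j(\omega_Q)$ rather than some Galois conjugate of this value. Since any element of $\mathrm{Gal}(H_K/K)$ is determined by its effect on the single generator $j(\tau_K)$, once this bookkeeping is resolved the theorem follows at once.
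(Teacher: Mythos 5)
Your proposal follows exactly the route the paper intends: it introduces the theorem as ``a consequence of the theory of complex multiplication and Theorem \ref{modulo1}'' and then simply cites Deuring, Hasse, and Lang for the details, so composing $\phi$ with the Artin/CM isomorphism and tracking the action on the generator $j(\tau_K)$ is the same argument. Your flagged bookkeeping point (that $\sigma_{[\mathfrak{a}]}$ acts by $j(\mathfrak{b})\mapsto j(\mathfrak{a}^{-1}\mathfrak{b})$, so one composes with inversion on the abelian group $\mathrm{Cl}(\mathcal{O}_K)$, which is harmless) is the only detail left to pin down, and it resolves as you anticipate.
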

\begin{proof}
See \cite{Deuring}, \cite{Hasse} or \cite[Theorem 1 in Chapter 10]{Lang87}.
\end{proof}

Now, for a finite abelian extension $L$ of $K$
such that $L\supseteq H_K$,
it is natural to ask whether there
is some form class group that is isomorphic to $\mathrm{Gal}(L/K)$.
Since $\mathrm{Gal}(H_K/K)$ ($\simeq\mathrm{C}(d_K)$) is a quotient group of
$\mathrm{Gal}(L/K)$, if we loosen the
proper equivalence on $\mathrm{C}(d_K)$ induced from $\mathrm{SL}_2(\mathbb{Z})$, then we would expect to get a certain
new form class group isomorphic to $\mathrm{Gal}(L/K)$.
Here we note that $L$ is contained in some ray class field $K_\mathfrak{n}$
modulo $\mathfrak{n}=N\mathcal{O}_K$ for a positive integer $N$ (\cite[p. 149]{Cox}).

\begin{proposition}\label{CM}
Let $\mathcal{F}_N$
be the field of meromorphic modular functions of level $N$
whose Fourier coefficients lie in the $N$th cyclotomic field. Then
we have
\begin{equation*}
K_\mathfrak{n}=K(h(\tau_K)~|~h(\tau)\in\mathcal{F}_N~\textrm{is finite at}~\tau_K).
\end{equation*}
\end{proposition}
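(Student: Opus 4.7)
The plan is to invoke Shimura's reciprocity law and match the fixed fields of the two sides under the Artin isomorphism $K_{\mathbb{A}}^{\times}/K^{\times}\xrightarrow{\sim}\mathrm{Gal}(K^{\mathrm{ab}}/K)$. Recall that Shimura's law says: for any $h\in\mathcal{F}_N$ finite at $\tau_K$ and any finite idele $s\in K_{\mathbb{A},f}^{\times}$, the Artin automorphism $[s,K]$ acts on $h(\tau_K)$ by $h(\tau_K)^{[s,K]}=h^{g_{\tau_K}(s)^{-1}}(\tau_K)$, where $g_{\tau_K}\colon K_{\mathbb{A},f}^{\times}\to\mathrm{GL}_2(\widehat{\mathbb{Z}})$ sends $s$ to the matrix describing its action on the basis $(\tau_K,1)$ of $\mathcal{O}_K\otimes\widehat{\mathbb{Z}}$. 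Recall also that $\mathrm{Gal}(\mathcal{F}_N/\mathcal{F}_1)\cong\mathrm{GL}_2(\mathbb{Z}/N\mathbb{Z})/\{\pm I\}$, so an element $\gamma\in\mathrm{GL}_2(\widehat{\mathbb{Z}})$ fixes every $h\in\mathcal{F}_N$ if and only if $\gamma\equiv\pm I\pmod{N}$.

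For the inclusion $K\bigl(h(\tau_K)\bigr)\subseteq K_\mathfrak{n}$, class field theory identifies $K_\mathfrak{n}$ as the fixed field of $K^{\times}U_\mathfrak{n}$, where $U_\mathfrak{n}=\{s\in\widehat{\mathcal{O}}_K^{\times}:s\equiv 1\pmod{N\widehat{\mathcal{O}}_K}\}$. If $s\in U_\mathfrak{n}$, then $g_{\tau_K}(s)\equiv I\pmod{N}$ at every finite place, so the formula above gives $h(\tau_K)^{[s,K]}=h(\tau_K)$ for every $h\in\mathcal{F}_N$ finite at $\tau_K$. Thus $h(\tau_K)$ is fixed by $\mathrm{Gal}(K^{\mathrm{ab}}/K_\mathfrak{n})$ and lies in $K_\mathfrak{n}$.

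For the reverse inclusion, suppose $s\in K_{\mathbb{A}}^{\times}$ fixes every value $h(\tau_K)$ under $[s,K]$; we must show $s\in K^{\times}U_\mathfrak{n}$. After modifying $s$ by a principal idele, we may assume $s$ is a finite idele. The hypothesis becomes $h^{g_{\tau_K}(s)^{-1}}(\tau_K)=h(\tau_K)$ for every $h\in\mathcal{F}_N$ finite at $\tau_K$. The crucial input is that evaluation at $\tau_K$ is faithful on the Galois action: if $\gamma\in\mathrm{GL}_2(\mathbb{Z}/N\mathbb{Z})$ fixes $h(\tau_K)$ for every such $h$, then $\gamma\equiv\pm I\pmod{N}$. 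Granting this, $g_{\tau_K}(s)\equiv\pm I\pmod{N}$, so after absorbing the sign by multiplying by $-1\in K^{\times}$ and dividing out any archimedean contribution by a suitable element of $K^{\times}$, we land in $U_\mathfrak{n}$.

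The main obstacle is precisely the faithfulness assertion used in the last step: one must know that $\mathcal{F}_N$ contains sufficiently many functions, finite at the particular CM point $\tau_K$, to separate all elements of $\mathrm{GL}_2(\mathbb{Z}/N\mathbb{Z})/\{\pm I\}$. The standard way to handle this is to use a function of the form $h\circ\alpha$ with $\alpha\in\mathrm{GL}_2^{+}(\mathbb{Q})$ to shift any chosen $h\in\mathcal{F}_N$ so that it is finite at $\tau_K$, combined with the explicit generators of $\mathcal{F}_N$ coming from Fricke functions $f_{(a_1,a_2)}^N$, which give a complete set of separating values at $\tau_K$. This is essentially the content of Shimura's Theorem~6.31 in \emph{Introduction to the Arithmetic Theory of Automorphic Functions}, and it is where the proof's technical weight sits.
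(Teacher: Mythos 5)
The paper gives no argument of its own for this proposition---its ``proof'' is just the citation to the Corollary to Theorem 2 in Chapter 10 of Lang's \emph{Elliptic Functions}---and your sketch is exactly the standard proof of that cited result via Shimura's reciprocity law, with the genuine crux (that the values at $\tau_K$ of functions in $\mathcal{F}_N$, e.g.\ the Fricke functions, separate $\mathrm{GL}_2(\mathbb{Z}/N\mathbb{Z})/\{\pm I_2\}$ because $j(\tau_K)\neq 0,1728$) correctly identified and deferred to Shimura's Theorem 6.31. So you are taking essentially the same route as the paper, namely the classical main theorem of complex multiplication for ray class fields, only with more of the skeleton displayed.
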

\begin{proof}
See \cite[Corollary to Theorem 2 in Chapter 10]{Lang87}.
\end{proof}

In this paper, we shall first construct a newly extended form class group
$\mathrm{C}_N(d_K)$ isomorphic to the ray class group $\mathrm{Cl}(\mathfrak{n})$,
through the equivalence relation induced from $\pm\Gamma_1(N)$ (Theorem \ref{group}).
It turns out that the binary operation on $\mathrm{C}_N(d_K)$
is essentially the Dirichlet composition on $\mathrm{C}(d_K)$
(Remark \ref{algorithm} (iv)).
\par
In view of Theorem \ref{Hilbert} and Proposition \ref{CM}
we shall further establish an isomorphism
\begin{eqnarray*}
\mathrm{C}_N(d_K)&\rightarrow&\mathrm{Gal}(K_\mathfrak{n}/K)\\
\begin{array}{c}\textrm{form class containing}\\
Q=ax^2+bxy+cy^2
\end{array}
&\mapsto&\left(h(\tau_K)\mapsto h_{\left[
\begin{smallmatrix}a&(b-b_K)/2\\0&1\end{smallmatrix}\right]}
(\omega_Q)~|~
h(\tau)\in\mathcal{F}_N~\textrm{is finite at}~\tau_K
\right),
\end{eqnarray*}
where $\min(\tau_K,\,\mathbb{Q})=x^2+b_Kx+c_K\in\mathbb{Z}[x]$ (Theorem \ref{CGisomorphism}).
This indicates that a form class $[ax^2+bxy+cy^2]$ in $\mathrm{C}_N(d_K)$
has perfect information on
an element
of $\mathrm{Gal}(K_\mathfrak{n}/K)$.
Of course, we shall present an algorithm in order to list all
representatives of
form classes in $\mathrm{C}_N(d_K)$ (Proposition \ref{list}) and give some examples.
\par
Let $K_\mathfrak{n}^\mathrm{ab}$
be the maximal abelian extension of $K$
unramified outside prime ideals dividing $\mathfrak{n}$.
As an application,
we shall construct a dense subset of
$\mathrm{Gal}(K_\mathfrak{n}^\mathrm{ab}/K)$,
equipped with Krull topology,
in terms of extended form class groups
(Theorem \ref{closure}).

\section {Extended form class groups as ray class groups}

Throughout this paper,
let $K$ be an imaginary quadratic field of discriminant $d_K$ other
than $\mathbb{Q}(\sqrt{-1})$ and $\mathbb{Q}(\sqrt{-3})$.
For a positive integer $N$, let
$\mathcal{Q}_N(d_K)$ be the set of primitive positive definite binary quadratic forms $Q(x,\,y)=ax^2+bxy+cy^2$ of discriminant $d_K$ such that $\gcd(N,\,a)=1$, that is,
\begin{equation*}
\mathcal{Q}_N(d_K)=\{ax^2+bxy+cy^2\in\mathcal{Q}(d_K)~|~
\gcd(N,\,a)=1\}.
\end{equation*}
By $\pm\Gamma_1(N)$ we mean the congruence subgroup of $\mathrm{SL}_2(\mathbb{Z})$ given by
\begin{equation*}
\pm\Gamma_1(N)=\left\{\sigma\in\mathrm{SL}_2(\mathbb{Z})~|~
\sigma\equiv\pm\begin{bmatrix}1&\mathrm{*}\\
0&1\end{bmatrix}\Mod{N}\right\}.
\end{equation*}

\begin{proposition}
The group $\pm\Gamma_1(N)$ acts on the set $\mathcal{Q}_N(d_K)$ on the right by
\begin{equation*}
Q^\sigma=Q\left(\sigma\begin{bmatrix}x\\y\end{bmatrix}\right)
\quad(\sigma\in\pm\Gamma_1(N),~Q\in\mathcal{Q}_N(d_K)).
\end{equation*}
\end{proposition}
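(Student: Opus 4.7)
The plan is to verify two things: (i) for $\sigma \in \pm\Gamma_1(N)$ and $Q \in \mathcal{Q}_N(d_K)$, the transformed form $Q^\sigma$ still lies in $\mathcal{Q}_N(d_K)$, and (ii) the prescription $Q \mapsto Q^\sigma$ defines a right action. The content is really in (i), since the right-action identity $Q^{\sigma\tau} = (Q^\sigma)^\tau$ is just an application of associativity of matrix multiplication inside $Q(\cdot)$, and $Q^{I_2}=Q$ is immediate.

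For (i), I would start by writing $\sigma=\begin{bmatrix}p&q\\r&s\end{bmatrix}$ and $Q=ax^2+bxy+cy^2$, and expanding
\begin{equation*}
Q^\sigma(x,y)=a'x^2+b'xy+c'y^2,
\end{equation*}
where in particular $a'=Q(p,r)=ap^2+bpr+cr^2$. Three of the four defining conditions on $\mathcal{Q}_N(d_K)$ are inherited from the general $\mathrm{SL}_2(\mathbb Z)$-action on $\mathcal{Q}(d_K)$ used in Theorem \ref{modulo1}: discriminant is preserved because $\det\sigma=1$, positive definiteness is preserved because $\sigma$ is a real invertible change of variables, and primitivity is preserved because $\sigma^{-1}\in\mathrm{SL}_2(\mathbb Z)$ so any common divisor of the coefficients of $Q^\sigma$ would descend to one of $Q$.

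The new ingredient — and the only real content of the proposition — is the coprimality $\gcd(N,a')=1$. This is where the specific shape of $\pm\Gamma_1(N)$ enters. From $\sigma\equiv\pm\begin{bmatrix}1&*\\0&1\end{bmatrix}\Mod{N}$ we read off $p\equiv\pm1\Mod{N}$ and $r\equiv 0\Mod{N}$, so
\begin{equation*}
a'=ap^2+bpr+cr^2\equiv a\Mod{N}.
\end{equation*}
Hence $\gcd(N,a')=\gcd(N,a)=1$. This is the step I expect to be the ``main obstacle'' only in the sense that it explains \emph{why} the congruence subgroup $\pm\Gamma_1(N)$ (rather than $\Gamma_0(N)$ or a different subgroup) is the correct one to use: it is exactly the subgroup that pins the first column of $\sigma$ to $\begin{bmatrix}\pm1\\0\end{bmatrix}\Mod{N}$, so that the leading coefficient of $Q^\sigma$ remains in the same residue class modulo $N$ (up to sign, which is irrelevant for coprimality) as that of $Q$.

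Once (i) is settled, part (ii) is formal: for $\sigma,\tau\in\pm\Gamma_1(N)$,
\begin{equation*}
Q^{\sigma\tau}\!\left(\begin{bmatrix}x\\y\end{bmatrix}\right)=Q\!\left(\sigma\tau\begin{bmatrix}x\\y\end{bmatrix}\right)=Q^\sigma\!\left(\tau\begin{bmatrix}x\\y\end{bmatrix}\right)=(Q^\sigma)^\tau\!\left(\begin{bmatrix}x\\y\end{bmatrix}\right),
\end{equation*}
and $Q^{I_2}=Q$ trivially. I would close by noting that this set-up is consistent with the subsequent development in the paper, where the equivalence classes under this action will form $\mathrm{C}_N(d_K)$.
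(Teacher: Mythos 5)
Your proposal is correct and follows essentially the same route as the paper: both reduce to checking that $\pm\Gamma_1(N)$ preserves $\mathcal{Q}_N(d_K)$ (the action axioms and preservation of discriminant, positive definiteness and primitivity being inherited from the standard $\mathrm{SL}_2(\mathbb{Z})$-action), and both obtain $a'\equiv a\Mod{N}$ from the congruence $\sigma\equiv\pm\left[\begin{smallmatrix}1&*\\0&1\end{smallmatrix}\right]\Mod{N}$. The paper computes the whole form modulo $N\mathbb{Z}[x,y]$ while you isolate the leading coefficient $a'=Q(p,r)$, but this is the same argument.
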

\begin{proof}
Since $\mathrm{SL}_2(\mathbb{Z})$
acts on $\mathcal{Q}(d_K)$,
it suffices to show that $\pm\Gamma_1(N)$ preserves the set $\mathcal{Q}_N(d_K)$.
Let $Q(x,\,y)=ax^2+bxy+cy^2\in\mathcal{Q}_N(d_K)$ and
$\sigma\in\pm\Gamma_1(N)$. We then see that
\begin{eqnarray*}
Q\left(\sigma\begin{bmatrix}x\\y\end{bmatrix}\right)
&\equiv& Q\left(\pm\begin{bmatrix}1&s\\0&1\end{bmatrix}
\begin{bmatrix}x\\y\end{bmatrix}\right)
\Mod{N\mathbb{Z}[x,\,y]}\quad\textrm{for some}~s\in\mathbb{Z}\\
&\equiv&ax^2+
(2as+b)xy+(as^2+bs+c)y^2\Mod{N\mathbb{Z}[x,\,y]}.
\end{eqnarray*}
This claims that $Q\left(\sigma\begin{bmatrix}x\\y\end{bmatrix}\right)$ belongs to $\mathcal{Q}_N(d_K)$, as desired.
\end{proof}

\begin{definition}
Define an equivalence relation $\sim_N$ on the set $\mathcal{Q}_N(d_K)$ by
\begin{equation*}
Q\sim_N Q'\quad\Longleftrightarrow\quad
Q'\left(\begin{bmatrix}x\\y\end{bmatrix}\right)
=Q\left(\sigma\begin{bmatrix}x\\y\end{bmatrix}\right)
~\textrm{for some}~\sigma\in\pm\Gamma_1(N).
\end{equation*}
Denote by $\mathrm{C}_N(d_K)$ the set of equivalence classes,
namely,
\begin{equation*}
\mathrm{C}_N(d_K)=\mathcal{Q}_N(d_K)/\sim_N.
\end{equation*}
\end{definition}

Now, we are in need of the following basic lemma for later use.

\begin{lemma}\label{elementary}
Let $Q(x,\,y)=ax^2+bxy+cy^2\in\mathcal{Q}(d_K)$ and
$\sigma=\begin{bmatrix}r&s\\u&v\end{bmatrix}\in\mathrm{SL}_2(\mathbb{Z})$.
\begin{enumerate}
\item[\textup{(i)}] If $\omega\in\mathbb{H}$, then
\begin{equation*}
[\sigma(\omega),\,1]=\frac{1}{j(\sigma,\,\omega)}[\omega,\,1]
\quad\textrm{where}~j(\sigma,\,\omega)=u\omega+v.
\end{equation*}
\item[\textup{(ii)}] If $Q'\left(\begin{bmatrix}x\\y\end{bmatrix}\right)=
Q\left(\sigma\begin{bmatrix}x\\y\end{bmatrix}\right)$, then
\begin{equation*}
\omega_Q=\sigma(\omega_{Q'}).
\end{equation*}
\item[\textup{(iii)}] We have
\begin{equation*}
\mathrm{N}_{K/\mathbb{Q}}([\omega_Q,\,1])=\frac{1}{a},
\end{equation*}
where $\mathrm{N}_{K/\mathbb{Q}}(\cdot)$ is applied to
fractional ideals of $K$.
\end{enumerate}
\end{lemma}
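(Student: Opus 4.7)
The plan is to treat each of the three parts as a short direct computation, using only the defining equation $\det\sigma=1$, the homogeneity of $Q$, and the standard formula for the norm of an integral ideal of the form $[(-b+\sqrt{d_K})/2,\,a]$.

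For part (i), I will write $\sigma(\omega)=(r\omega+s)/(u\omega+v)$ and pull the denominator $j(\sigma,\omega)=u\omega+v$ out of the lattice to obtain
\begin{equation*}
[\sigma(\omega),\,1]=\tfrac{1}{j(\sigma,\omega)}[r\omega+s,\,u\omega+v].
\end{equation*}
Since $\sigma\in\mathrm{SL}_2(\mathbb{Z})$, the matrix $\sigma$ is a unimodular base change on the $\mathbb{Z}$-module $[\omega,\,1]$, so $[r\omega+s,\,u\omega+v]=[\omega,\,1]$, and (i) follows. This step is immediate; the only thing to note is that $u\omega+v\neq0$ because $\omega\in\mathbb{H}$ and $(u,v)\ne(0,0)$.

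For part (ii), I will set $y=1$ in the identity $Q'(x,y)=Q(\sigma[x\ y]^t)$ to obtain $Q'(x,1)=Q(rx+s,\,ux+v)$. Evaluating at $x=\omega_{Q'}$ gives $Q(r\omega_{Q'}+s,\,u\omega_{Q'}+v)=0$, and since $Q$ is homogeneous of degree two and $u\omega_{Q'}+v\neq 0$ by the same reason as in (i), I can divide by $(u\omega_{Q'}+v)^2$ to conclude $Q(\sigma(\omega_{Q'}),\,1)=0$. Because $\sigma$ preserves $\mathbb{H}$, the point $\sigma(\omega_{Q'})$ is the unique root of $Q(x,1)=0$ lying in $\mathbb{H}$, which is $\omega_Q$ by definition.

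For part (iii), I will invoke Remark \ref{aRemark} to rewrite $a[\omega_Q,\,1]=[(-b+\sqrt{d_K})/2,\,a]$, which is an integral ideal of $\mathcal{O}_K$. The standard index computation (comparing the $\mathbb{Z}$-basis $\{(-b+\sqrt{d_K})/2,\,a\}$ of this ideal with the $\mathbb{Z}$-basis $\{(-b_K+\sqrt{d_K})/2,\,1\}$ of $\mathcal{O}_K$ via Smith normal form, using that $b\equiv b_K\Mod 2$) yields norm $a$. Then the multiplicativity $\mathrm{N}_{K/\mathbb{Q}}(a[\omega_Q,\,1])=a^2\,\mathrm{N}_{K/\mathbb{Q}}([\omega_Q,\,1])$ gives $\mathrm{N}_{K/\mathbb{Q}}([\omega_Q,\,1])=1/a$, as claimed.

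None of the three items poses a real obstacle; the closest thing to a subtlety is the norm formula in (iii), which requires recognizing $[(-b+\sqrt{d_K})/2,\,a]$ as an $\mathcal{O}_K$-ideal (so that one can speak of its ideal norm at all) and then performing the elementary index computation to identify that norm with $a$.
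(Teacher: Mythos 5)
Your proofs of (i) and (ii) are correct and essentially identical to the paper's: the same pulling-out of $j(\sigma,\omega)$ from the lattice together with unimodularity of $\sigma$ for (i), and the same evaluation of the homogeneous identity at $(\omega_{Q'},1)$ followed by division by $j(\sigma,\omega_{Q'})^2$ for (ii). For (iii) you take a genuinely different, though equally standard, route. The paper computes the discriminant of the fractional ideal $[\omega_Q,\,1]$ directly as the squared determinant of the matrix of conjugates, obtaining $d_K/a^2$, and then invokes the relation $\mathrm{disc}_{K/\mathbb{Q}}(\mathfrak{a})=\mathrm{N}_{K/\mathbb{Q}}(\mathfrak{a})^2 d_K$ for fractional ideals to read off $\mathrm{N}_{K/\mathbb{Q}}([\omega_Q,\,1])=1/a$. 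You instead pass to the integral ideal $a[\omega_Q,\,1]=[(-b+\sqrt{d_K})/2,\,a]$, compute its index in $\mathcal{O}_K$ by Smith normal form (correct: with $b\equiv b_K\Mod{2}$ the change-of-basis matrix is triangular with determinant $a$), and divide by $a^2$ using multiplicativity. Your approach carries the extra burden of checking that $[(-b+\sqrt{d_K})/2,\,a]$ really is an $\mathcal{O}_K$-ideal --- this is where $b^2-4ac=d_K$ and primitivity enter, and you rightly flag it --- while the paper's discriminant computation sidesteps integrality altogether. On the other hand, the intermediate fact you prove, namely $\mathrm{N}_{K/\mathbb{Q}}(a[\omega_Q,\,1])=a$, is exactly what the paper later needs in the proof of Proposition \ref{well}, so your version makes that step more self-contained. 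Both arguments are complete and correct.
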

\begin{proof}
\begin{enumerate}
\item[(i)] It follows from the fact $\sigma\in\mathrm{SL}_2(\mathbb{Z})$ that
\begin{equation*}
[\sigma(\omega),\,1]=
\left[\frac{r\omega+s}{u\omega+v},\,1\right]=
\frac{1}{u\omega+v}[r\omega+s,\,u\omega+v]
=\frac{1}{j(\sigma,\,\omega)}[\omega,\,1].
\end{equation*}
\item[(ii)]
\begin{equation*}
Q\left(\begin{bmatrix}\omega_Q\\1\end{bmatrix}\right)=0=
Q'\left(\begin{bmatrix}\omega_{Q'}\\1\end{bmatrix}\right)
=Q\left(\sigma\begin{bmatrix}\omega_{Q'}\\1\end{bmatrix}\right)
=j(\sigma,\,\omega_{Q'})^2Q\left(\begin{bmatrix}\sigma(\omega_{Q'})\\1\end{bmatrix}\right).
\end{equation*}
Since $\omega_Q,\,\omega_{Q'}\in\mathbb{H}$, we conclude $\omega_Q=\sigma(\omega_{Q'})$.
\item[(iii)]
\begin{equation*}
\mathrm{disc}_{K/\mathbb{Q}}([\omega_Q,\,1])=\left|\begin{matrix}
(-b+\sqrt{d_K})/2a & 1\\
(-b-\sqrt{d_K})/2a & 1
\end{matrix}\right|^2=\frac{d_K}{a^2}.
\end{equation*}
On the other hand, since
\begin{equation*}
\mathrm{disc}_{K/\mathbb{Q}}([\omega_Q,\,1])=\mathrm{N}_{K/\mathbb{Q}}([\omega_Q,\,1])^2d_K
\end{equation*}
(\cite[Proposition 13 in Chapter III]{Lang94}),
we achieve
\begin{equation*}
\mathrm{N}_{K/\mathbb{Q}}([\omega_Q,\,1])=\frac{1}{a}.
\end{equation*}

\end{enumerate}
\end{proof}

From now on, we let $\mathfrak{n}=N\mathcal{O}_K$ and
\begin{equation*}
\mathrm{Cl}(\mathfrak{n})=I_K(\mathfrak{n})/P_{K,\,1}(\mathfrak{n})
\end{equation*}
be the ray class group of $K$ modulo $\mathfrak{n}$, where
$I_K(\mathfrak{n})$ is the subgroup of
$I_K$ consisting of fractional ideals of $K$ prime to $\mathfrak{n}$ and $P_{K,\,1}(\mathfrak{n})$ is its subgroup
consisting of principal fractional ideals $\lambda\mathcal{O}_K$
with $\lambda\in K^*$
such that $\lambda\equiv^*1\Mod{\mathfrak{n}}$ (\cite[pp. 136--137]{Janusz}).

\begin{definition}\label{map}
Define a map
\begin{eqnarray*}
\phi_N~:~\mathrm{C}_N(d_K)&\rightarrow&\mathrm{Cl}(\mathfrak{n})\\
\mathrm{[}Q\mathrm{]}&\mapsto&\textrm{ray class containing}~[\omega_Q,\,1].
\end{eqnarray*}
Here, $[Q]$ stands for the form class containing $Q\in\mathcal{Q}_N(d_K)$.
\end{definition}

\begin{remark}
By Remark \ref{aRemark}, we see that $\phi_1=\phi$, the classical isomorphism
described in Theorem \ref{modulo1}.
\end{remark}

\begin{proposition}\label{well}
The map $\phi_N$ is well defined.
\end{proposition}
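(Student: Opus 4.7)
The proposition packages two claims: first, that for each $Q\in\mathcal{Q}_N(d_K)$ the fractional ideal $[\omega_Q,1]$ actually lies in $I_K(\mathfrak{n})$, so that its ray class is defined; second, that $\sim_N$-equivalent forms yield the same ray class. The first claim follows at once from Remark \ref{aRemark} and Lemma \ref{elementary}(iii): the integral ideal $a[\omega_Q,1]$ has norm $a$, and the condition $\gcd(a,N)=1$ defining $\mathcal{Q}_N(d_K)$ prevents any prime above $N$ from dividing it; combined with $a\mathcal{O}_K$ itself being coprime to $\mathfrak{n}$, this forces $[\omega_Q,1]\in I_K(\mathfrak{n})$.

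For the second claim, suppose $Q'=Q^\sigma$ for some $\sigma=\begin{bmatrix}r&s\\u&v\end{bmatrix}\in\pm\Gamma_1(N)$, so that $u\equiv0\Mod{N}$ and $r\equiv v\equiv\epsilon\Mod{N}$ for a sign $\epsilon\in\{\pm1\}$. Parts (ii) and (i) of Lemma \ref{elementary} combine to give
\begin{equation*}
[\omega_Q,1]=\frac{1}{j(\sigma,\omega_{Q'})}[\omega_{Q'},1],
\quad j(\sigma,\omega_{Q'})=u\omega_{Q'}+v,
\end{equation*}
so the task reduces to verifying that $(1/j(\sigma,\omega_{Q'}))\mathcal{O}_K$ lies in $P_{K,1}(\mathfrak{n})$.

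To convert the $\Mod{N}$ information on $\sigma$ into the multiplicative congruence $\equiv^*\Mod{\mathfrak{n}}$, I would clear denominators with the leading coefficient $a'$ of $Q'$: since $a'\omega_{Q'}=(-b'+\sqrt{d_K})/2\in\mathcal{O}_K$, the element $a'j(\sigma,\omega_{Q'})=u(a'\omega_{Q'})+a'v$ sits in $\mathcal{O}_K$, and the congruences $u,\,v-\epsilon\in N\mathbb{Z}$ together with $\gcd(a',N)=1$ yield $\epsilon a'j(\sigma,\omega_{Q'})\equiv a'\Mod{\mathfrak{n}}$ with both sides coprime to $\mathfrak{n}$. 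Hence $\epsilon j(\sigma,\omega_{Q'})\equiv^*1\Mod{\mathfrak{n}}$, and since $\epsilon=\pm1\in\mathcal{O}_K^\times$ may be absorbed into the generator, the ideal $(1/j(\sigma,\omega_{Q'}))\mathcal{O}_K=(\epsilon/j(\sigma,\omega_{Q'}))\mathcal{O}_K$ indeed lies in $P_{K,1}(\mathfrak{n})$.

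The computations are routine once the framework is in place; the only genuine obstacle is the bookkeeping required to track the $\pm$ sign from $\pm\Gamma_1(N)$ while passing from the additive $\Mod{N}$ congruence on entries of $\sigma$ to the multiplicative $\equiv^*$ defining $P_{K,1}(\mathfrak{n})$, and the key move is to clear denominators by $a'$ so that everything takes place inside $\mathcal{O}_K$ where the two congruence notions can be compared directly.
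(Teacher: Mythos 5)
Your proof is correct and follows essentially the same route as the paper: both parts use Lemma \ref{elementary} (iii) with $\gcd(N,a)=1$ for membership in $I_K(\mathfrak{n})$, and Lemma \ref{elementary} (i)--(ii) plus the congruence $j(\sigma,\omega_{Q'})\equiv^*\pm1\Mod{\mathfrak{n}}$ for invariance under $\sim_N$. Your explicit clearing of denominators by $a'$ merely spells out the step the paper compresses into the line $u\omega_{Q'}+v\equiv^*\pm1\Mod{\mathfrak{n}}$.
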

\begin{proof}
First, we shall show that
if $Q(x,\,y)=ax^2+bxy+cy^2\in\mathcal{Q}_N(d_K)$, then
the fractional ideal $[\omega_Q,\,1]$ is prime to $\mathfrak{n}$.
Observe that $a[\omega_Q,\,1]=[(-b+\sqrt{d_K})/2,\,a]$ is an integral ideal of $K$ with
\begin{equation*}
\mathrm{N}_{K/\mathbb{Q}}(a[\omega_Q,\,1])=a
\end{equation*}
by Lemma \ref{elementary} (iii).
This, together with the fact $\gcd(N,\,a)=1$, implies that
$[\omega_Q,\,1]$ is prime to $\mathfrak{n}$.
\par
Second, we shall show that if $Q,\,Q'\in\mathcal{Q}_N(d_K)$ such that $[Q]=[Q']$, then
$[\omega_Q,\,1]$ and $[\omega_{Q'},\,1]$ belong to the same ray class in $\mathrm{Cl}(\mathfrak{n})$.
Let
\begin{equation*}
Q'\left(\begin{bmatrix}x\\y\end{bmatrix}\right)=
a'x^2+b'xy+c'y^2=
Q\left(\sigma\begin{bmatrix}x\\y\end{bmatrix}\right)\quad
\textrm{for some}~\sigma=\begin{bmatrix}\mathrm{*}&\mathrm{*}\\
u&v\end{bmatrix}\in\pm\Gamma_1(N).
\end{equation*}
We then derive by Lemma \ref{elementary} (i) and (ii) that
\begin{equation*}
[\omega_{Q'},\,1]=(u\omega_{Q'}+v)[\sigma(\omega_{Q'}),\,1]
=(u\omega_{Q'}+v)[\omega_Q,\,1].
\end{equation*}
Since $\sigma\equiv\pm\begin{bmatrix}1&\mathrm{*}\\0&1\end{bmatrix}
\Mod{N}$ and $\gcd(N,\,a')=1$, we attain
\begin{equation*}
u\omega_{Q'}+v\equiv^* u\frac{-b'+\sqrt{d_K}}{2a'}+v
\equiv^*\pm1\Mod{\mathfrak{n}}.
\end{equation*}
This yields that $[\omega_Q,\,1]$ and $[\omega_{Q'},\,1]$
belong to the same ray class in $\mathrm{Cl}(\mathfrak{n})$.
\end{proof}

\begin{proposition}\label{injective}
The map $\phi_N$ is injective.
\end{proposition}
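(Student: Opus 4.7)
The plan is to assume $\phi_N([Q])=\phi_N([Q'])$ for $Q,Q'\in\mathcal{Q}_N(d_K)$ and deduce that $[Q]=[Q']$ in $\mathrm{C}_N(d_K)$. First, after composing with the natural projection $\mathrm{Cl}(\mathfrak{n})\to\mathrm{Cl}(\mathcal{O}_K)$, the fractional ideals $[\omega_Q,1]$ and $[\omega_{Q'},1]$ represent the same class in $\mathrm{Cl}(\mathcal{O}_K)$. By the classical isomorphism $\phi=\phi_1$ of Theorem \ref{modulo1} (cf.\ Remark \ref{aRemark}), this would force $Q$ and $Q'$ to be $\mathrm{SL}_2(\mathbb{Z})$-equivalent, so some $\sigma=\begin{bmatrix}r&s\\u&v\end{bmatrix}\in\mathrm{SL}_2(\mathbb{Z})$ satisfies $Q'(\begin{bmatrix}x\\y\end{bmatrix})=Q(\sigma\begin{bmatrix}x\\y\end{bmatrix})$. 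What remains is to upgrade $\sigma$ to an element of $\pm\Gamma_1(N)$.

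Next I would apply Lemma \ref{elementary}(i) and (ii) to this equivalence to obtain the lattice identity $[\omega_{Q'},1]=(u\omega_{Q'}+v)[\omega_Q,1]$. At the same time, the hypothesis $\phi_N([Q])=\phi_N([Q'])$ supplies $\lambda\in K^*$ with $\lambda\equiv^*1\pmod{\mathfrak{n}}$ and $[\omega_{Q'},1]=\lambda[\omega_Q,1]$. Since $[\omega_Q,1]$ is a proper $\mathcal{O}_K$-ideal, its multiplicative stabilizer in $K^*$ is $\mathcal{O}_K^*=\{\pm1\}$---this is precisely where the standing hypothesis $K\neq\mathbb{Q}(\sqrt{-1}),\mathbb{Q}(\sqrt{-3})$ is used. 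Consequently $u\omega_{Q'}+v=\pm\lambda$, and therefore $u\omega_{Q'}+v\equiv^*\pm 1\pmod{\mathfrak{n}}$.

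The central step is to convert this $\equiv^*$ relation for the (possibly non-integral) element $u\omega_{Q'}+v\in K$ into ordinary congruences on $u,v$ modulo $N$. Writing $\sqrt{d_K}=2\tau_K+b_K$ and using $b'\equiv b_K\pmod 2$, I would compute
\begin{equation*}
a'(u\omega_{Q'}+v)=u\tau_K+\left(a'v+\frac{u(b_K-b')}{2}\right)\in\mathcal{O}_K.
\end{equation*}
Because $\gcd(a',N)=1$ by the defining condition of $\mathcal{Q}_N(d_K)$, the element $a'$ is a unit modulo $\mathfrak{n}$, so clearing denominators in the $\equiv^*$ relation by $a'$ produces the honest congruence
\begin{equation*}
u\tau_K+\left(a'v+\frac{u(b_K-b')}{2}\right)\equiv\pm a'\pmod{N\mathcal{O}_K}.
\end{equation*}
Comparing the two coefficients in the $\mathbb{Z}$-basis $\{\tau_K,1\}$ of $\mathcal{O}_K$ first forces $u\equiv 0\pmod N$; the constant side then collapses to $a'v\equiv\pm a'\pmod N$, and invertibility of $a'$ modulo $N$ yields $v\equiv\pm 1\pmod N$ with the matching sign.

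To finish, $\det\sigma=rv-su=1$ combined with $u\equiv 0\pmod N$ gives $rv\equiv 1\pmod N$, hence $r\equiv\pm 1\pmod N$ with the same sign as $v$. Therefore $\sigma\in\pm\Gamma_1(N)$, so $[Q]=[Q']$ in $\mathrm{C}_N(d_K)$. The main obstacle I anticipate is the third step: interpreting the ray-class congruence $\equiv^*$ (defined via integral representatives) as a genuine congruence inside $\mathcal{O}_K/N\mathcal{O}_K$ even though $u\omega_{Q'}+v$ need not lie in $\mathcal{O}_K$. The coprimality $\gcd(a',N)=1$ is exactly what saves the argument, since it allows one to clear $a'$ from the denominator of $u\omega_{Q'}+v$ without disturbing the congruence modulo $\mathfrak{n}$.
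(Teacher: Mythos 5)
Your proposal is correct and follows essentially the same route as the paper's proof: reduce to an $\mathrm{SL}_2(\mathbb{Z})$-equivalence via Theorem \ref{modulo1}, compare the lattice identity from Lemma \ref{elementary} with the ray-class relation to conclude $u\omega_{Q'}+v\equiv^*\pm1\Mod{\mathfrak{n}}$ using $\mathcal{O}_K^*=\{\pm1\}$, clear the denominator $a'$ (coprime to $N$) to read off $u\equiv0$, $v\equiv\pm1\Mod{N}$, and finish with $\det\sigma=1$. Your write-up merely makes explicit the passage from the multiplicative congruence to componentwise congruences in the basis $\{\tau_K,1\}$, which the paper leaves terse.
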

\begin{proof}
Suppose that
\begin{equation*}
\phi_N([Q])=\phi_N([Q'])\quad
\textrm{for some}~Q,\,Q'\in\mathcal{Q}_N(d_K),
\end{equation*}
and so
\begin{equation}\label{lambda}
[\omega_Q,\,1]=\lambda[\omega_{Q'},\,1]
\quad\textrm{for some}~\lambda\in K^*~\textrm{such that}~\lambda
\equiv^*1\Mod{\mathfrak{n}}.
\end{equation}
Then, we get by Theorem \ref{modulo1} that
\begin{equation*}
Q'\left(\begin{bmatrix}x\\y\end{bmatrix}\right)
=Q\left(\sigma\begin{bmatrix}x\\y\end{bmatrix}\right)
\quad\textrm{for some}~
\sigma=\begin{bmatrix}\mathrm{*}&\mathrm{*}\\
u&v\end{bmatrix}\in\mathrm{SL}_2(\mathbb{Z}).
\end{equation*}
And, it follows from Lemma \ref{elementary} (i), (ii) and (\ref{lambda}) that
\begin{equation*}
[\omega_{Q'},\,1]=j(\sigma,\,\omega_{Q'})[\sigma(\omega_{Q'}),\,1]=
(u\omega_{Q'}+v)[\omega_Q,\,1]=
\lambda(u\omega_{Q'}+v)[\omega_{Q'},\,1],
\end{equation*}
and hence
\begin{equation*}
\lambda(u\omega_{Q'}+v)\in\mathcal{O}_K^*=\{1,\,-1\}.
\end{equation*}
Now that $\lambda\equiv^*1\Mod{\mathfrak{n}}$, we deduce
\begin{equation}\label{pm1}
u\omega_{Q'}+v\equiv^*\pm1\Mod{\mathfrak{n}}.
\end{equation}
If we let $Q'(x,\,y)=a'x^2+b'xy+c'y^2$, then
we have $\mathcal{O}_K=[(-b'+\sqrt{d_K})/2,\,1]$ and
\begin{equation*}
u\omega_{Q'}+v\pm1=\frac{1}{a'}\left(u\frac{-b'+\sqrt{d_K}}{2}+a'(v\pm1)\right).
\end{equation*}
Thus, it follows from the fact $\gcd(N,\,a')=1$ and (\ref{pm1}) that
\begin{equation*}
u\equiv0,\quad v\equiv\pm1\Mod{N}.
\end{equation*}
Moreover, since $\det(\sigma)=1$, we obtain
\begin{equation*}
\sigma\equiv\pm\begin{bmatrix}1&\mathrm{*}\\
0&1\end{bmatrix}\Mod{N}.
\end{equation*}
Therefore, $Q$ and $Q'$ belong to the same class in $\mathrm{C}_N(d_K)$,
namely, $[Q]=[Q']$.
This proves the proposition.
\end{proof}

\begin{proposition}\label{surjective}
The map $\phi_N$ is surjective.
\end{proposition}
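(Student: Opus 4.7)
The plan is to produce, for any given ray class $C\in\mathrm{Cl}(\mathfrak{n})$, an explicit preimage under $\phi_N$. The first step is to pick an integral ideal $\mathfrak{a}$ representing $C$; since $\mathfrak{a}\in I_K(\mathfrak{n})$ is coprime to $\mathfrak{n}$, its norm is automatically coprime to $N$.

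The heart of the argument is to find $\beta\in\mathfrak{a}$ that is simultaneously (i) congruent to $1$ modulo $\mathfrak{n}$ and (ii) primitive as a vector in the rank-two $\mathbb{Z}$-lattice $\mathfrak{a}$. Condition (i) holds for some $\beta_0\in\mathfrak{a}$ by the Chinese Remainder Theorem applied to the coprime ideals $\mathfrak{a}$ and $\mathfrak{n}$. To upgrade to (ii) I would fix a $\mathbb{Z}$-basis $\mathfrak{a}=[\delta_1,\delta_2]$, write $\beta_0=m_0\delta_1+n_0\delta_2$, and observe that the congruence $\beta_0\equiv 1\pmod{p\mathcal{O}_K}$ for every prime $p\mid N$ forbids $p$ from dividing both $m_0$ and $n_0$. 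Thus $\gcd(m_0,n_0,N)=1$, and a classical elementary argument yields integers $k,l$ with $\gcd(m_0+Nk,\,n_0+Nl)=1$. Setting $\beta:=(m_0+Nk)\delta_1+(n_0+Nl)\delta_2$ gives a primitive element of $\mathfrak{a}$, and since the modification lies in $N\mathfrak{a}\subseteq\mathfrak{n}$, we still have $\beta\equiv 1\pmod{\mathfrak{n}}$.

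With such a $\beta$ in hand, I would extend it to a $\mathbb{Z}$-basis $\mathfrak{a}=[\alpha,\beta]$ and, after possibly replacing $\alpha$ by $-\alpha$, arrange that $\omega:=\alpha/\beta\in\mathbb{H}$. The minimal polynomial of $\omega$ over $\mathbb{Q}$ supplies a primitive positive definite form $Q=ax^2+bxy+cy^2\in\mathcal{Q}(d_K)$ with $\omega_Q=\omega$, so by construction $[\omega_Q,\,1]=[\omega,\,1]=\beta^{-1}\mathfrak{a}$.

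To finish, I would verify $Q\in\mathcal{Q}_N(d_K)$ and $\phi_N([Q])=C$. For the first claim, Lemma~\ref{elementary}(iii) together with the multiplicativity of the norm gives $a=\mathrm{N}(\beta)/\mathrm{N}(\mathfrak{a})$; since $\beta\equiv 1\pmod{\mathfrak{n}}$ forces $\mathrm{N}(\beta)=\beta\bar\beta\equiv 1\pmod{N}$ while $\mathrm{N}(\mathfrak{a})$ is already coprime to $N$, we conclude $\gcd(a,N)=1$. For the second claim, $\beta\equiv^{*}1\pmod{\mathfrak{n}}$ places $(\beta^{-1})$ in $P_{K,1}(\mathfrak{n})$, so $[\omega_Q,\,1]=\beta^{-1}\mathfrak{a}$ and $\mathfrak{a}$ represent the same ray class, namely $C$. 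The main obstacle is arranging conditions (i) and (ii) for $\beta$ at the same time; once that is done, the remaining verifications are routine lattice bookkeeping parallel to the calculations in Propositions~\ref{well} and~\ref{injective}.
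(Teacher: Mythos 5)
Your argument is correct, and it is structurally parallel to the paper's proof but runs in the ``dual'' direction. The paper takes an integral ideal $\mathfrak{a}$ in $C^{-1}$ and works with $\mathfrak{a}^{-1}=[\xi_1,\xi_2]$: it uses the element $1\in\mathfrak{a}^{-1}$, written as $u\xi_1+v\xi_2$ with $\gcd(N,u,v)=1$, and the surjectivity of $\mathrm{SL}_2(\mathbb{Z})\rightarrow\mathrm{SL}_2(\mathbb{Z}/N\mathbb{Z})$ to replace $(u,v)$ by a genuinely coprime pair $(\widetilde{u},\widetilde{v})$, so that $[\omega,1]=(\widetilde{u}\xi_1+\widetilde{v}\xi_2)^{-1}\mathfrak{a}^{-1}$ with $\widetilde{u}\xi_1+\widetilde{v}\xi_2\equiv^*1\Mod{\mathfrak{n}}$. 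You instead take $\mathfrak{a}$ in $C$ itself, manufacture $\beta_0\in\mathfrak{a}$ with $\beta_0\equiv 1\Mod{\mathfrak{n}}$ by CRT, and upgrade it to a primitive lattice vector by the elementary lemma on lifting pairs that are primitive modulo $N$ --- which is exactly the fact underlying the $\mathrm{SL}_2$-surjectivity the paper invokes. In both proofs the crux is identical: exhibit a primitive element of the relevant lattice that is multiplicatively congruent to $1$ modulo $\mathfrak{n}$, so that $[\omega_Q,\,1]$ lands in the prescribed ray class. Your version buys two small things: it avoids the detour through $C^{-1}$, and it makes the verification $\gcd(N,a)=1$ for the resulting form fully explicit via $a=\mathrm{N}_{K/\mathbb{Q}}(\beta)/\mathrm{N}_{K/\mathbb{Q}}(\mathfrak{a})$, a point the paper leaves implicit when it asserts that the constructed $Q$ lies in $\mathcal{Q}_N(d_K)$. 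The paper's version, for its part, reuses the same $\mathrm{SL}_2$-lifting device that reappears in Remark \ref{algorithm} and Proposition \ref{list}, which keeps the later algorithms uniform with the surjectivity proof.
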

\begin{proof}
Let $C\in\mathrm{Cl}(\mathfrak{n})$.
Take an integral ideal $\mathfrak{a}$ in $C^{-1}$, and let
$\xi_1,\,\xi_2\in\ K^*$ such that
\begin{equation*}
\mathfrak{a}^{-1}=[\xi_1,\,\xi_2]\quad\textrm{and}\quad\xi=\frac{\xi_1}{\xi_2}\in\mathbb{H}.
\end{equation*}
Since $1\in\mathfrak{a}^{-1}$, one can write
\begin{equation}\label{1}
1=u\xi_1+v\xi_2\quad\textrm{for some}~u,\,v\in\mathbb{Z}.
\end{equation}
We then claim $\gcd(N,\,u,\,v)=1$. Otherwise, $d=\gcd(N,\,u,\,v)>1$, and so
$d\mathfrak{a}^{-1}=[d\xi_1,\,d\xi_2]$ contains $1$ by (\ref{1}), which implies
$d\mathcal{O}_K\supseteq\mathfrak{a}$. But, this
contradicts the fact that $\mathfrak{a}$ is prime to $\mathfrak{n}$.
Thus we may take a matrix
$\sigma=\begin{bmatrix}\mathrm{*} & \mathrm{*}\\
\widetilde{u}&\widetilde{v}\end{bmatrix}$ in $\mathrm{SL}_2(\mathbb{Z})$
such that
\begin{equation}\label{sigmaN}
\sigma\equiv\begin{bmatrix}\mathrm{*}&\mathrm{*}\\
u&v\end{bmatrix}\Mod{N}
\end{equation}
by the surjectivity of $\mathrm{SL}_2(\mathbb{Z})
\rightarrow\mathrm{SL}_2(\mathbb{Z}/N\mathbb{Z})$ (\cite[Lemma 1.38]{Shimura}).
If we set $\omega=\sigma(\xi)$, then we derive that
\begin{eqnarray*}
[\omega,\,1]&=&[\sigma(\omega),\,1]\\
&=&\frac{1}{\widetilde{u}\xi+\widetilde{v}}[\xi,\,1]
\quad\textrm{by Lemma \ref{elementary} (i)}\\
&=&\frac{\xi_2}{\widetilde{u}\xi_1+\widetilde{v}\xi_2}[\xi_1/\xi_2,\,1]
\quad\textrm{by the fact}~\xi=\xi_1/\xi_2\\
&=&\frac{1}{\widetilde{u}\xi_1+\widetilde{v}\xi_2}[\xi_1,\,\xi_2]\\
&=&\frac{1}{\widetilde{u}\xi_1+\widetilde{v}\xi_2}\mathfrak{a}^{-1}.
\end{eqnarray*}
Here, we note that
\begin{eqnarray*}
\widetilde{u}\xi_1+\widetilde{v}\xi_2-1&=&
\widetilde{u}\xi_1+\widetilde{v}\xi_2-(u\xi_1+v\xi_2)\quad
\textrm{by (\ref{1})}\\
&=&(\widetilde{u}-u)\xi_1+(\widetilde{v}-v)\xi_2\\
&\in&N\mathfrak{a}^{-1}\quad\textrm{by (\ref{sigmaN})},
\end{eqnarray*}
from which we see that
\begin{equation*}
\widetilde{u}\xi+\widetilde{v}\equiv^*1\Mod{\mathfrak{n}}.
\end{equation*}
Therefore, $[\omega,\,1]$ and $\mathfrak{a}^{-1}$ belong to the same ray class $C$. Thus, if we let $Q$ be the element of
$\mathcal{Q}_N(d_K)$ satisfying $\omega_Q=\omega$, then we conclude
\begin{equation*}
\phi_N([Q])=C.
\end{equation*}
\end{proof}

\begin{theorem}\label{group}
The set $\mathrm{C}_N(d_K)$ can be regarded as an abelian group isomorphic to the ray class group $\mathrm{Cl}(\mathfrak{n})$.
\end{theorem}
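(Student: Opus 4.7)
The plan is essentially to harvest what has already been done. By Propositions \ref{well}, \ref{injective}, and \ref{surjective}, the map $\phi_N : \mathrm{C}_N(d_K) \to \mathrm{Cl}(\mathfrak{n})$ is a well-defined bijection of sets from $\mathrm{C}_N(d_K)$ onto the abelian group $\mathrm{Cl}(\mathfrak{n})$. Since $\mathrm{C}_N(d_K)$ carries no a priori group structure, the cleanest route is to transport the structure from $\mathrm{Cl}(\mathfrak{n})$ along the bijection $\phi_N$: declare
\begin{equation*}
[Q] \cdot [Q'] = \phi_N^{-1}\bigl(\phi_N([Q])\, \phi_N([Q'])\bigr).
\end{equation*}

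With this definition, the verification that $\mathrm{C}_N(d_K)$ becomes an abelian group is automatic, because the group axioms (associativity, commutativity, existence of identity and inverses) transfer through any bijection. By construction, $\phi_N$ is then a homomorphism, and being bijective, it is an isomorphism of groups. Thus $\mathrm{C}_N(d_K) \simeq \mathrm{Cl}(\mathfrak{n})$.

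There is no real obstacle at this stage, since all the work has been absorbed into the three preceding propositions. The only mild subtlety worth flagging is that this transported operation is not, on the face of it, given by an intrinsic rule on binary quadratic forms; the more substantive task of identifying it with a concrete formula (essentially the Dirichlet composition, as promised in the introduction and Remark \ref{algorithm}(iv)) is left to a later point in the paper and is not required for the present statement.
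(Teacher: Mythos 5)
Your proposal is correct and is essentially identical to the paper's own proof: the authors likewise define $[Q]\cdot[Q']=\phi_N^{-1}(\phi_N([Q])\,\phi_N([Q']))$, transporting the group structure through the bijection established by Propositions \ref{well}, \ref{injective} and \ref{surjective}, and defer the explicit description of the operation to Remark \ref{algorithm} (iv). Nothing further is needed.
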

\begin{proof}
Define a binary operation $\cdot$ on $\mathrm{C}_N(d_K)$ by
\begin{equation*}
[Q]\cdot[Q']=\phi_N^{-1}(\phi_N([Q])\phi_N([Q'])),
\end{equation*}
where $\phi_N([Q])\phi_N([Q'])$ is the product of
ray classes in $\mathrm{Cl}(\mathfrak{n})$. This binary operation makes
$\mathrm{C}_N(d_K)$ an abelian group isomorphic to $\mathrm{Cl}(\mathfrak{n})$.
We shall describe the group operation on $\mathrm{C}_N(d_K)$ explicitly in
the following Remark \ref{algorithm} (iv).
\end{proof}

\begin{remark}\label{algorithm}
\begin{enumerate}
\item[(i)] If $M$ is a positive divisor of $N$ and $\mathfrak{m}=M\mathcal{O}_K$, then we have
by Definition \ref{map}
a commutative diagram of homomorphisms
\begin{equation*}
\begin{CD}
\mathrm{C}_N(d_K) @>\phi_N>> \mathrm{Cl}(\mathfrak{n})\\
@VVV @VVV\\
\mathrm{C}_M(d_K) @>\phi_M>> \mathrm{Cl}(\mathfrak{m})
\end{CD}
\end{equation*}
The vertical maps are natural projections.
\item[(ii)]
Let $\tau_K$ be the element of $\mathbb{H}$ induced by
the principal form
\begin{equation*}
\left\{\begin{array}{ll}
\displaystyle x^2+xy+\frac{1-d_K}{4}y^2 & \textrm{if}~d_K\equiv1\Mod{4},\\
\displaystyle x^2-\frac{d_K}{4}y^2 & \textrm{if}~d_K\equiv0\Mod{4}.
\end{array}\right.
\end{equation*}
Since $[\tau_K,\,1]=\mathcal{O}_K$, the principal form
gives rise to the identity element of $\mathrm{C}_N(d_K)$.
\item[(iii)] For a quadratic form $Q(x,\,y)=ax^2+bxy+cy^2\in\mathcal{Q}_N(d_K)$,
we want to find its inverse $[Q]^{-1}$ in $\mathrm{C}_N(d_K)$.
Let $\mathfrak{c}=a^{\varphi(N)}[\omega_Q,\,1]$,
where $\varphi$ is the Euler function. Then,
$\mathfrak{c}$ is an integral ideal of $K$ which belongs to the same
ray class as $[\omega_Q,\,1]$ because $a^{\varphi(N)}\equiv1\Mod{N}$.
Since $\mathfrak{c}\overline{\mathfrak{c}}=\mathrm{N}_{K/\mathbb{Q}}(\mathfrak{c})\mathcal{O}_K$
and $\mathrm{N}_{K/\mathbb{Q}}([\omega_Q,\,1])=1/a$ by Lemma
\ref{elementary} (iii), we get
\begin{equation*}
\mathfrak{c}^{-1}=\frac{1}{\mathrm{N}_{K/\mathbb{Q}}(\mathfrak{c})}
\overline{\mathfrak{c}}=\frac{1}{a^{\varphi(N)-1}}[-\overline{\omega}_Q,\,1];
\end{equation*}
and hence we obtain
\begin{equation*}
1=\frac{1}{a^{\varphi(N)-1}}(0\cdot(-\overline{\omega}_Q)+a^{\varphi(N)-1}\cdot1).
\end{equation*}
Take an element $\sigma$ in $\mathrm{SL}_2(\mathbb{Z})$ such that
\begin{equation*}
\sigma\equiv\begin{bmatrix}\mathrm{*}&\mathrm{*}\\
0&a^{\varphi(N)-1}\end{bmatrix}\Mod{N}.
\end{equation*}
Now, if we let $Q'\in\mathcal{Q}_N(d_K)$ satisfying
$\omega_{Q'}=\sigma(-\overline{\omega}_Q)$, then
we achieve by the proof of Proposition \ref{surjective} that $Q'$ and $\mathfrak{c}^{-1}$ give the same ray class.
Therefore, $[Q']$ is the inverse of $[Q]$ in $\mathrm{C}_N(d_K)$.
\item[(iv)] Let $Q_1(x,\,y)=a_1x^2+b_1xy+c_1y^2,\,Q_2(x,\,y)=
a_2x^2+b_2xy+c_2y^2\in\mathcal{Q}_N(d_K)$. We will describe
an algorithm how to find $[Q_1]\cdot[Q_2]$ explicitly.
One may take a matrix $\rho$ in $\mathrm{SL}_2(\mathbb{Z})$ so that
$Q_3(x,\,y)=a_3x^2+b_3xy+c_3y^2$ defined by
\begin{equation}\label{Q''}
Q_3\left(\begin{bmatrix}x\\y\end{bmatrix}\right)
=Q_2\left(\rho\begin{bmatrix}x\\y\end{bmatrix}\right)
\end{equation}
satisfies $\gcd(a_1,\,a_3,\,(b_1+b_3)/2)=1$ (\cite[Lemmas 2.3 and 2.25]{Cox}).
We then attain
\begin{equation}\label{B}
[\omega_{Q_1},\,1][\omega_{Q_3},\,1]=\left[\frac{-B+\sqrt{d_K}}{2a_1a_3},\,1\right],
\end{equation}
where $B$ is an integer for which
\begin{equation*}
B\equiv b_1\Mod{2a_1},\quad B\equiv b_3\Mod{2a_3}\quad
\textrm{and}\quad B^2\equiv d_K\Mod{4a_1a_3}
\end{equation*}
(\cite[Lemma 3.2 and (7.13)]{Cox}).
(This ideal multiplication gives us the Dirichlet composition on $\mathrm{C}_1(d_K)=\mathrm{C}(d_K)$.)
On the other hand, we know by Definition \ref{map} that
$\phi_N([Q_1])\phi_N([Q_2])$ is the ray class containing the fractional ideal
\begin{equation*}
\mathfrak{c}=[\omega_{Q_1},\,1][\omega_{Q_2},\,1].
\end{equation*}
Thus, we get that
\begin{eqnarray*}
\mathfrak{c}&=&[\omega_{Q_1},\,1][\rho(\omega_{Q_3}),\,1]
\quad\textrm{by (\ref{Q''}) and Lemma \ref{elementary} (ii)}\\
&=&\frac{1}{j(\rho,\,\omega_{Q3})}[\omega_{Q_1},\,1][\omega_{Q_3},\,1]
\quad\textrm{by Lemma \ref{elementary} (i)}\\
&=&\frac{1}{j(\rho,\,\omega_{Q_3})}
\left[\frac{-B+\sqrt{d_K}}{2a_1a_3},\,1\right]\quad\textrm{by (\ref{B})}.
\end{eqnarray*}
By the fact $\mathfrak{c}\overline{\mathfrak{c}}=
\mathrm{N}_{K/\mathbb{Q}}(\mathfrak{c})\mathcal{O}_K$ and Lemma \ref{elementary} (iii) we see that
\begin{equation*}
\mathfrak{a}=\mathfrak{c}^{-1}
=\frac{1}{\mathrm{N}_{K/\mathbb{Q}}(\mathfrak{c})}\overline{\mathfrak{c}}=
a_1a_2\overline{\mathfrak{c}}=[-a_1\overline{\omega}_{Q_1},\,a_1]
[-a_2\overline{\omega}_{Q_2},\,a_2]
\end{equation*}
is an integral ideal in the ray class $\left(\phi_N([Q_1])\phi([Q_2])\right)^{-1}$.
Now, by using the argument in the proof of Proposition \ref{surjective} one can have
$Q_4\in\mathcal{Q}_N(d_K)$ so that
$\phi_N([Q_4])$ is the ray class containing $\mathfrak{a}^{-1}=\mathfrak{c}$.
Therefore, we achieve
\begin{equation*}
[Q_4]=[Q_1]\cdot[Q_2].
\end{equation*}
\end{enumerate}
\end{remark}

\section {Extended form class groups as Galois groups}

Let $K_\mathfrak{n}$ be the ray class field of $K$ modulo $\mathfrak{n}$ ($=N\mathcal{O}_K$), that is, $K_\mathfrak{n}$ is the
unique abelian extension of $K$ whose Galois group
$\mathrm{Gal}(K_\mathfrak{n}/K)$ corresponds
to $\mathrm{Cl}(\mathfrak{n})$ via the Artin reciprocity map for $\mathfrak{n}$.
In this section, we shall establish
an isomorphism of $\mathrm{C}_N(d_K)$ onto $\mathrm{Gal}
(K_\mathfrak{n}/K)$ in a concrete way.
\par
Let $\mathcal{F}_N$ be the field of
meromorphic modular functions of level $N$ with
Fourier coefficients in $\mathbb{Q}(\zeta_N)$, where
$\zeta_N=e^{2\pi\mathrm{i}/N}$. It is well known that
$\mathcal{F}_N$ is a Galois extension of $\mathcal{F}_1$ with
\begin{equation*}
\mathrm{Gal}(\mathcal{F}_N/\mathcal{F}_1)\simeq
\mathrm{GL}_2(\mathbb{Z}/N\mathbb{Z})/\{\pm I_2\}
\end{equation*}
(\cite[Theorem 6.6]{Shimura}).
In particular, the subgroup $\mathrm{SL}_2(\mathbb{Z}/N\mathbb{Z})/\{\pm I_2\}$ of $\mathrm{GL}_2(\mathbb{Z}/N\mathbb{Z})/\{\pm I_2\}$
acts on the field $\mathcal{F}_N$ as follows: Let $h(\tau)\in\mathcal{F}_N$ and $\alpha\in\mathrm{SL}_2(\mathbb{Z}/N\mathbb{Z})/\{\pm I_2\}$.
Then we have
\begin{equation*}
h(\tau)^\alpha=h(\widetilde{\alpha}(\tau)),
\end{equation*}
where $\widetilde{\alpha}$ is any matrix in $\mathrm{SL}_2(\mathbb{Z})$ that
reduces to $\alpha$ via
$\mathrm{SL}_2(\mathbb{Z})\rightarrow
\mathrm{SL}_2(\mathbb{Z}/N\mathbb{Z})/\{\pm I_2\}$.

\begin{definition}
We call a Family
\begin{equation*}
\{h_\alpha(\tau)\}_{\alpha\in\mathrm{GL}_2(\mathbb{Z}/N\mathbb{Z})/\{\pm I_2\}}
\end{equation*}
of functions in $\mathcal{F}_N$ a \textit{Fricke family} of level $N$
if
\begin{equation*}
h_\alpha(\tau)^\beta=h_{\alpha\beta}(\tau)
\quad\textrm{for all}~\alpha,\,\beta\in\mathrm{GL}_2(\mathbb{Z}/N\mathbb{Z})/\{\pm I_2\}.
\end{equation*}
\end{definition}

\begin{remark}
For a Fricke family $\{h_\alpha(\tau)\}_\alpha$,
let $h(\tau)=h_{I_2}(\tau)$. Then we get
\begin{equation*}
h(\tau)^\alpha=h_{I_2}(\tau)^\alpha=
h_{I_2\alpha}(\tau)=h_\alpha(\tau)
\quad(\alpha\in\mathrm{GL}_2(\mathbb{Z}/N\mathbb{Z})/\{\pm I_2\}).
\end{equation*}
This shows that $\{h_\alpha(\tau)\}_\alpha$ is a family of Galois
conjugates of $h(\tau)=h_{I_2}(\tau)$
under $\mathrm{Gal}(\mathcal{F}_N/\mathcal{F}_1)$.
\end{remark}

For a class $C\in\mathrm{Cl}(\mathfrak{n})$ take an integral ideal $\mathfrak{a}$ in $C^{-1}$, and let $\xi_1,\,\xi_2\in K^*$ such that
\begin{equation*}
\mathfrak{a}^{-1}=[\xi_1,\,\xi_2]\quad\textrm{and}\quad
\xi=\frac{\xi_1}{\xi_2}\in\mathbb{H}.
\end{equation*}
Let $\tau_K$ be the element of $\mathbb{H}$ stated
in Remark \ref{algorithm} (ii).
Since $\mathcal{O}_K\subseteq\mathfrak{a}^{-1}$
and $\xi\in\mathbb{H}$, one can write
\begin{equation}\label{rsuv}
\begin{bmatrix}\tau_K\\1\end{bmatrix}=
\begin{bmatrix}r&s\\u&v\end{bmatrix}
\begin{bmatrix}\xi_1\\\xi_2\end{bmatrix}
\quad\textrm{for some}~
A=\begin{bmatrix}r&s\\u&v\end{bmatrix}\in M_2^+(\mathbb{Z}).
\end{equation}
It then follows that
\begin{equation*}
\begin{bmatrix}\tau_K&
\overline{\tau}_K\\1&1\end{bmatrix}=
\begin{bmatrix}r&s\\u&v\end{bmatrix}
\begin{bmatrix}\xi_1&\overline{\xi}_1\\
\xi_2&\overline{\xi}_2\end{bmatrix}.
\end{equation*}
Taking determinant and squaring, we obtain
\begin{equation*}
d_K=\det(A)^2\mathrm{disc}_{K/\mathbb{Q}}(\mathfrak{a}^{-1})
=\det(A)^2\mathrm{N}_{K/\mathbb{Q}}(\mathfrak{a})^{-2}d_K
\end{equation*}
(\cite[Proposition 13 in Chapter III]{Lang94}).
Thus, we deduce
$\det(A)=\mathrm{N}_{K/\mathbb{Q}}(\mathfrak{a})$ which is prime to $N$.

\begin{definition}\label{invariant}
Let $\{h_\alpha(\tau)\}_\alpha$ be a Fricke family of level $N$,
and let $C\in\mathrm{Cl}(\mathfrak{n})$.
Following the above notations, we define
\begin{equation*}
h_\mathfrak{n}(C)=h_{A}(\xi).
\end{equation*}
Here, we regard $A$ as an element of $\mathrm{GL}_2(\mathbb{Z}/N\mathbb{Z})/\{\pm I_2\}$.
\end{definition}

\begin{proposition}
The value $h_\mathfrak{n}(C)$ depends only on the ray class $C$, not
on the choices of $\mathfrak{a}$ and $\xi_1,\,\xi_2$.
\end{proposition}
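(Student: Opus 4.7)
The plan is to verify independence from the two sources of choice separately: the $\mathbb{Z}$-basis $(\xi_1,\xi_2)$ of $\mathfrak{a}^{-1}$ with $\mathfrak{a}$ held fixed, and the replacement of $\mathfrak{a}$ by another integral representative $\mathfrak{a}' = \lambda\mathfrak{a}$ of $C^{-1}$, where necessarily $\lambda\equiv^*1\Mod{\mathfrak{n}}$. Any two admissible triples $(\mathfrak{a},\xi_1,\xi_2)$ and $(\mathfrak{a}',\xi_1',\xi_2')$ can be linked by first rescaling to the adapted basis $\xi_i'':=\lambda^{-1}\xi_i$ on $\mathfrak{a}'^{-1}$ and then performing an $\mathrm{SL}_2(\mathbb{Z})$-change of basis to $(\xi_1',\xi_2')$, so it suffices to establish invariance under each move.

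For the basis change with $\mathfrak{a}$ fixed, any replacement $(\xi_1',\xi_2')^\mathrm{T} = \gamma(\xi_1,\xi_2)^\mathrm{T}$ has $\gamma\in\mathrm{SL}_2(\mathbb{Z})$, since $\xi,\xi'\in\mathbb{H}$ forces $\det\gamma=+1$. This gives $\xi'=\gamma(\xi)$ and $A'=A\gamma^{-1}$. Applying the Fricke family identity
\[
h_{A\gamma^{-1}}(\tau)^{\gamma} = h_{A\gamma^{-1}\cdot\gamma}(\tau) = h_A(\tau),
\]
together with the rule $h(\tau)^{\gamma}=h(\widetilde\gamma(\tau))$ (choosing the lift $\widetilde\gamma=\gamma$), yields $h_A(\xi) = h_{A\gamma^{-1}}(\gamma\xi) = h_{A'}(\xi')$.

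For the ideal change with the adapted basis $\xi_i' = \lambda^{-1}\xi_i$, we have $\xi'=\xi$, so it suffices to prove $A'\equiv A \Mod{N}$ as matrices in $\mathrm{GL}_2(\mathbb{Z}/N\mathbb{Z})/\{\pm I_2\}$, which gives $h_{A'}=h_A$ in $\mathcal{F}_N$. Substituting $\xi_i' = \lambda^{-1}\xi_i$ into $A'(\xi_1',\xi_2')^\mathrm{T} = (\tau_K,1)^\mathrm{T}$ rewrites the defining relation as $A'(\xi_1,\xi_2)^\mathrm{T} = (\lambda\tau_K,\lambda)^\mathrm{T}$, so the two rows of $A'-A$ express $(\lambda-1)\tau_K$ and $\lambda-1$ in the $\mathbb{Z}$-basis $(\xi_1,\xi_2)$ of $\mathfrak{a}^{-1}$. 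The required congruence thus reduces to the single containment
\[
\lambda - 1 \in \mathfrak{n}\mathfrak{a}^{-1},
\]
from which $(\lambda-1)\tau_K\in\mathfrak{n}\mathfrak{a}^{-1}$ follows automatically because $\tau_K\in\mathcal{O}_K$.

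Verifying this containment is the main technical point, and I would do it prime by prime in the completions. At a prime $\mathfrak{p}\mid\mathfrak{n}$, coprimality of $\mathfrak{a}$ with $\mathfrak{n}$ collapses $\mathfrak{a}^{-1}$ locally to $\mathcal{O}_{K,\mathfrak{p}}$, so $\mathfrak{n}\mathfrak{a}^{-1}\mathcal{O}_{K,\mathfrak{p}} = \mathfrak{n}\mathcal{O}_{K,\mathfrak{p}}$, and the hypothesis $\lambda\equiv^*1\Mod{\mathfrak{n}}$ is exactly the statement $v_\mathfrak{p}(\lambda-1)\ge v_\mathfrak{p}(\mathfrak{n})$. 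At a prime $\mathfrak{p}\nmid\mathfrak{n}$, we have $\mathfrak{n}\mathfrak{a}^{-1}\mathcal{O}_{K,\mathfrak{p}} = \mathfrak{a}^{-1}\mathcal{O}_{K,\mathfrak{p}}$, and the integrality of $\mathfrak{a}'=\lambda\mathfrak{a}$ forces $\lambda\in\mathfrak{a}^{-1}$, hence $\lambda-1\in\mathfrak{a}^{-1}$. Intersecting over all $\mathfrak{p}$ delivers the claimed membership and completes the verification.
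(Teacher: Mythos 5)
Your proposal is correct and follows essentially the same route as the paper: it splits the verification into the $\mathrm{SL}_2(\mathbb{Z})$-basis change (handled by the Fricke family transformation rule, exactly as in the paper) and the replacement $\mathfrak{a}\mapsto\lambda\mathfrak{a}$ (handled by showing $\lambda-1\in N\mathfrak{a}^{-1}$, hence $A'\equiv A\Mod{N}$). Your prime-by-prime verification of $\lambda-1\in\mathfrak{n}\mathfrak{a}^{-1}$ is just a local rephrasing of the paper's global containment argument $(\lambda-1)\mathfrak{a}\subseteq\mathfrak{n}$.
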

\begin{proof}
First, let $\mathfrak{a}'$ be another integral ideal in $C^{-1}$. Then we have
\begin{equation*}
\mathfrak{a}'=\lambda\mathfrak{a}\quad
\textrm{for some}~\lambda\in K^*~\textrm{such that}~\lambda\equiv^*1\Mod{\mathfrak{n}},
\end{equation*}
and so
\begin{equation*}
\mathfrak{a}'^{-1}=\lambda^{-1}\mathfrak{a}^{-1}=
[\lambda^{-1}\xi_1,\,\lambda^{-1}\xi_2]\quad
\textrm{and}\quad
\frac{\lambda^{-1}\xi_1}{\lambda^{-1}\xi_2}=\frac{\xi_1}{\xi_2}=\xi\in\mathbb{H}.
\end{equation*}
We see from the fact
$\mathfrak{a},\,\mathfrak{a}'=\lambda\mathfrak{a}\subseteq\mathcal{O}_K$
that
\begin{equation*}
(\lambda-1)\mathfrak{a}\subseteq\mathcal{O}_K.
\end{equation*}
Moreover, since $\lambda\equiv^*1\Mod{\mathfrak{n}}$
and $\mathfrak{a}$ is prime to $\mathfrak{n}$, we attain
\begin{equation*}
(\lambda-1)\mathfrak{a}\subseteq \mathfrak{n},
\end{equation*}
and hence
\begin{equation*}
(\lambda-1)\mathcal{O}_K\subseteq N\mathfrak{a}^{-1}.
\end{equation*}
Thus we obtain by the fact $\mathcal{O}_K=[\tau_K,\,1]$ that
\begin{equation}\label{lambda-1}
(\lambda-1)\tau_K=N(a\xi_1+b\xi_2)\quad
\textrm{and}\quad
\lambda-1=N(c\xi_1+d\xi_2)\quad
\textrm{for some}~a,\,b,\,c,\,d\in\mathbb{Z}.
\end{equation}
On the other hand, since $\lambda\mathcal{O}_K\subseteq\lambda\mathfrak{a}'^{-1}=\mathfrak{a}^{-1}
=[\xi_1,\,\xi_2]$, we may write
\begin{equation}\label{r's'u'v'}
\begin{bmatrix}\lambda\tau_K\\\lambda\end{bmatrix}
=\begin{bmatrix}r'&s'\\u'&v'\end{bmatrix}\begin{bmatrix}
\xi_1\\\xi_2\end{bmatrix}
\quad\textrm{for some}~\begin{bmatrix}r'&s'\\u'&v'\end{bmatrix}\in M_2^+(\mathbb{Z}).
\end{equation}
One can then derive by (\ref{rsuv}), (\ref{lambda-1})
and (\ref{r's'u'v'}) that
\begin{equation*}
N\begin{bmatrix}a&b\\c&d\end{bmatrix}\begin{bmatrix}\xi_1\\\xi_2\end{bmatrix}
=\begin{bmatrix}r'&s'\\u'&v'\end{bmatrix}\begin{bmatrix}\xi_1\\\xi_2\end{bmatrix}
-\begin{bmatrix}r&s\\u&v\end{bmatrix}\begin{bmatrix}\xi_1\\\xi_2\end{bmatrix},
\end{equation*}
which yields
\begin{equation*}
\begin{bmatrix}
r'&s'\\u'&v'
\end{bmatrix}
\equiv\begin{bmatrix}r&s\\u&v\end{bmatrix}\Mod{N}.
\end{equation*}
\par
Second, let $\xi_1',\,\xi_2'\in\mathbb{H}$ such that
\begin{equation*}
\mathfrak{a}^{-1}=[\xi_1,\,\xi_2]=[\xi_1',\,\xi_2']\quad\textrm{and}\quad
\xi'=\frac{\xi_1'}{\xi_2'}\in\mathbb{H}.
\end{equation*}
We then express
\begin{equation*}
\begin{bmatrix}
\tau_K\\1
\end{bmatrix}=A'\begin{bmatrix}\xi_1'\\\xi_2'\end{bmatrix}
~\textrm{and}~
\begin{bmatrix}\xi_1'\\\xi_2'\end{bmatrix}=
B\begin{bmatrix}\xi_1\\\xi_2\end{bmatrix}
~\textrm{for some}~A'\in M_2^+(\mathbb{Z})~
\textrm{and}~B\in\mathrm{SL}_2(\mathbb{Z}),
\end{equation*}
and so by (\ref{rsuv}) we deduce
\begin{equation*}
A'\begin{bmatrix}\xi_1'\\\xi_2'\end{bmatrix}=
A\begin{bmatrix}\xi_1\\\xi_2\end{bmatrix}=
AB^{-1}\begin{bmatrix}\xi_1'\\\xi_2'\end{bmatrix}.
\end{equation*}
Hence we achieve
\begin{equation*}
\xi'=B(\xi)\quad\textrm{and}\quad A'=AB^{-1}.
\end{equation*}
Therefore we get that
\begin{equation*}
h_{A'}(\xi')=h_{AB^{-1}}(B(\xi))
=h_{AB^{-1}}(\tau)^B|_{\tau=\xi}
=h_{AB^{-1}B}(\tau)|_{\tau=\xi}
=h_{A}(\xi),
\end{equation*}
which proves the proposition.
\end{proof}

\begin{remark}
\begin{enumerate}
\item[(i)] If $C_0$ denotes the identity class in $\mathrm{Cl}(\mathfrak{n})$,
namely, $C_0$ is the ray class containing $\mathcal{O}_K=[\tau_K,\,1]$, then
\begin{equation*}
h_\mathfrak{n}(C_0)=h_{I_2}(\tau_K).
\end{equation*}
\item[(ii)] The invariant $h_\mathfrak{n}(C)$
is an analogue of the Siegel-Ramachandra invariant given in \cite[p. 235]{K-L} and \cite{Ramachandra}.
\end{enumerate}
\end{remark}

Let
\begin{equation*}
\mathrm{GL}_2(\mathbb{A}_\mathrm{f})=
\prod_{p\,:\,\textrm{primes}}{\phantom{1}}
\hspace{-0.6cm}'~~\mathrm{GL}_2(\mathbb{Q}_p),
\end{equation*}
where $'$ denotes the restricted product,
that is, for almost all $p$ the $p$-component
of an element of $\mathrm{GL}_2(\mathbb{A}_\mathrm{f})$ lies
in $\mathrm{GL}_2(\mathbb{Z}_p)$. One may express $\mathrm{GL}_2(\mathbb{A}_\mathrm{f})$ as
\begin{equation*}
\mathrm{GL}_2(\mathbb{A}_\mathrm{f})
=U\mathrm{GL}_2^+(\mathbb{Q})=\mathrm{GL}_2^+(\mathbb{Q})U,
\quad\textrm{where}~
U=\prod_{p\,:\,\textrm{primes}}\mathrm{GL}_2(\mathbb{Z}_p)
\end{equation*}
(\cite[Theorem 1 in Chapter 7]{Lang87}).
Let
\begin{equation*}
\mathcal{F}=\bigcup_{M=1}^\infty\mathcal{F}_M.
\end{equation*}
Then, we have a surjective homomorphism
\begin{equation*}
\sigma_\mathrm{f}:\mathrm{GL}_2(\mathbb{A}_\mathrm{f})\rightarrow\mathrm{Aut}(\mathcal{F})
\end{equation*}
with $\mathrm{Ker}(\sigma_\mathrm{f})=\mathbb{Q}^*$
(\cite[Theorems 4 and 6 in Chapter 7]{Lang87} or \cite[Theorem 6.23]{Shimura}). More precisely,
let $h(\tau)\in\mathcal{F}_N$ and
$\gamma=\alpha\beta\in\mathrm{GL}_2(\mathbb{A}_\mathrm{f})$
with $\alpha=(\alpha_p)_p\in U$ and $\beta\in\mathrm{GL}_2^+(\mathbb{Q})$.
By using the Chinese remainder theorem, one can find a unique matrix $\widetilde{\alpha}$ in $\mathrm{GL}_2(\mathbb{Z}/N\mathbb{Z})$ satisfying
$\widetilde{\alpha}\equiv\alpha_p\Mod{N}$ for all primes $p$ such that $p\,|\,N$.
We then obtain
\begin{equation}\label{Aut(F)}
h(\tau)^{\sigma_\mathrm{f}(\gamma)}=h^{\widetilde{\alpha}}(\beta(\tau))
\end{equation}
(\cite[Theorem 2 in Chapter 7 and p. 79]{Lang87}).
\par
For $\omega\in K\cap\mathbb{H}$, we have an embedding
\begin{equation*}
q_\omega:K^*\rightarrow\mathrm{GL}_2^+(\mathbb{Q})
\end{equation*}
defined by
\begin{equation*}
\xi\begin{bmatrix}\omega\\1\end{bmatrix}=
q_\omega(\xi)\begin{bmatrix}\omega\\1\end{bmatrix}
\quad(\xi\in K^*).
\end{equation*}
By continuity one can extend $q_\omega$ to an embedding
\begin{equation*}
q_{\omega,\,p}:K_p^*\rightarrow\mathrm{GL}_2(\mathbb{Q}_p)
\end{equation*}
for each prime $p$, and hence
to an embedding of idele groups
\begin{equation*}
q_\omega:\mathbb{A}_K^*\rightarrow\mathrm{GL}_2(\mathbb{A}_\mathrm{f})
\end{equation*}
(\cite[p. 149]{Lang87}).
Let $K^\mathrm{ab}$ be the maximal abelian extension of $K$.

\begin{proposition}[Shimura's reciprocity law]\label{Shimura}
Let $s$ be a idele of $K$ and
$(s^{-1},\,K)$ be the Artin symbol for $s^{-1}$ on $K^\mathrm{ab}$.
Let $\omega\in K\cap\mathbb{H}$ and  $h(\tau)\in\mathcal{F}$ which is finite at $\omega$. Then,
$h(\omega)$ lies in $K^\mathrm{ab}$ and satisfies
\begin{equation*}
h(\omega)^{(s^{-1},\,K)}=h(\tau)^{\sigma_\mathrm{f}(q_\omega(s))}|_{\tau=\omega}.
\end{equation*}
\end{proposition}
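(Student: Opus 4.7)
The plan is to reduce the statement to the main theorem of complex multiplication and then chase through the identification of the adelic action on $\mathcal{F}$ with the Artin action on $K^{\mathrm{ab}}$. First, since $h(\tau)\in\mathcal{F}_N$ for some $N$ and $\omega\in K\cap\mathbb{H}$, Proposition \ref{CM} (applied to $\tau_K$ replaced by a generator of $\mathcal{O}$ for the order attached to $\omega$, and combined with the fact that $K^{\mathrm{ab}}=\bigcup_M K_{M\mathcal{O}_K}$) shows immediately that $h(\omega)\in K^{\mathrm{ab}}$. So the substantive content is the explicit formula for the conjugate.

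The approach is to check the formula on a generating family of $\mathcal{F}$, exploiting that both sides of the proposed identity are multiplicative in $h$ and compatible with the Galois action. I would handle two cases separately. \textbf{Case A:} $h(\tau)\in\mathcal{F}_1$, where the description (\ref{Aut(F)}) collapses to $h(\tau)^{\sigma_\mathrm{f}(\gamma)}=h(\beta(\tau))$ depending only on the $\mathrm{GL}_2^+(\mathbb{Q})$-part $\beta$ of $\gamma=\alpha\beta$. Applied to $h=j$ and $\gamma=q_\omega(s)$, the formula $j(\omega)^{(s^{-1},K)}=j(q_\omega(s)(\omega))$ is essentially the classical statement that conjugation by $(s^{-1},K)$ sends the lattice $[\omega,1]$ to $s^{-1}[\omega,1]$, which is the Shimura--Taniyama main theorem of CM. \textbf{Case B:} Fricke functions of level $N$, which together with $\mathcal{F}_1$ generate $\mathcal{F}_N$. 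Here one uses that conjugation by $(s^{-1},K)$ acts on $N$-torsion points of the CM elliptic curve $\mathbb{C}/[\omega,1]$ via multiplication by the $N$-component of $s^{-1}$; this matches precisely the action of $\widetilde{\alpha}\in\mathrm{GL}_2(\mathbb{Z}/N\mathbb{Z})$ on Fricke values via (\ref{Aut(F)}), because the embedding $q_\omega$ was built to translate scalar multiplication in $K$ into matrix multiplication on the basis $(\omega,1)$ of $\mathcal{O}_K$.

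The main obstacle is to reconcile the two distinct appearances of the decomposition $\gamma=\alpha\beta$ inside $\sigma_\mathrm{f}(q_\omega(s))$: the finite part $\alpha$ enters through its reduction $\widetilde{\alpha}$ modulo $N$ as a Galois automorphism of $\mathcal{F}_N/\mathcal{F}_1$ acting on Fourier coefficients, while $\beta\in\mathrm{GL}_2^+(\mathbb{Q})$ enters as a Möbius transformation of the argument $\tau$. These two actions are coupled by $q_\omega$, and showing that their combined effect on $h(\omega)$ coincides with the single Artin symbol $(s^{-1},K)$ requires a simultaneous tracking of the lattice $[\omega,1]$, its $N$-torsion, and the way $s^{-1}$ scales both. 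This is the core of Shimura's theorem, carried out in full in \cite[Chapter 7]{Lang87} and \cite[Chapter 6]{Shimura}; since the present paper only uses the formula as a black box, I would invoke those references rather than reproduce the argument.
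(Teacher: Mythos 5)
Your proposal is correct and ends up in the same place as the paper, which offers no proof of its own but simply cites \cite[Theorem 1 in Chapter 11]{Lang87} and \cite[Theorem 6.31 (i)]{Shimura}; your concluding decision to invoke those references as a black box is exactly what the authors do. The preceding sketch (reduction to the main theorem of complex multiplication for $\mathcal{F}_1$ and to the Galois action on torsion points for Fricke functions) is a faithful outline of how those references actually prove the result, but it is supplementary to, not divergent from, the paper's treatment.
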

\begin{proof}
See \cite[Theorem 1 in Chapter 11]{Lang87} or \cite[Theorem 6.31 (i)]{Shimura}.
\end{proof}

\begin{theorem}\label{Artin}
Let $\{h_\alpha(\tau)\}_\alpha$ be a Fricke family of level $N$, and
let $C\in\mathrm{Cl}(\mathfrak{n})$.
If $h_\mathfrak{n}(C)$ is finite, then it belongs to $K_\mathfrak{n}$ and satisfies
\begin{equation*}
h_\mathfrak{n}(C)^{\sigma_\mathfrak{n}(C'^{-1})}=
h_\mathfrak{n}(CC')\quad\textrm{for all}~C'\in\mathrm{Cl}(\mathfrak{n})
\end{equation*}
where $\sigma_\mathfrak{n}:\mathrm{Cl}(\mathfrak{n})\rightarrow\mathrm{Gal}(K_\mathfrak{n}/K)$
is the Artin reciprocity map for $\mathfrak{n}$.
\end{theorem}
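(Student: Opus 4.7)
The plan is to derive both conclusions of the theorem from Shimura's reciprocity law (Proposition \ref{Shimura}) applied to $h_\mathfrak{n}(C)=h_A(\xi)$, where $A\in M_2^+(\mathbb{Z})$ and $\xi=\xi_1/\xi_2\in K\cap\mathbb{H}$ come from an integral ideal $\mathfrak{a}\in C^{-1}$ with $\mathfrak{a}^{-1}=[\xi_1,\xi_2]$ and $\begin{bmatrix}\tau_K\\1\end{bmatrix}=A\begin{bmatrix}\xi_1\\\xi_2\end{bmatrix}$. Since $h_A\in\mathcal{F}_N\subseteq\mathcal{F}$ is finite at $\xi\in K\cap\mathbb{H}$, Proposition \ref{Shimura} immediately yields $h_\mathfrak{n}(C)\in K^{\mathrm{ab}}$ together with the master identity
\[
h_\mathfrak{n}(C)^{(s^{-1},K)}=h_A(\tau)^{\sigma_\mathrm{f}(q_\xi(s))}|_{\tau=\xi}=h_{A\widetilde{u}}(\beta(\xi))\qquad(s\in\mathbb{A}_K^*),
\]
where $q_\xi(s)=u\beta$ is any decomposition with $u\in U$ and $\beta\in\mathrm{GL}_2^+(\mathbb{Q})$, $\widetilde{u}\in\mathrm{GL}_2(\mathbb{Z}/N\mathbb{Z})$ is the reduction of $u$ modulo $N$, and the last equality combines (\ref{Aut(F)}) with the Fricke property $h_A^{\widetilde{u}}=h_{A\widetilde{u}}$. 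Both assertions will be extracted from this single identity by judicious choices of $s$.

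To show $h_\mathfrak{n}(C)\in K_\mathfrak{n}$, I will specialize the master identity to ideles $s$ in the subgroup $W_\mathfrak{n}\subseteq\mathbb{A}_K^*$ with $s_\mathfrak{p}\in\mathcal{O}_{K,\mathfrak{p}}^*$ at every finite $\mathfrak{p}$ and $s_\mathfrak{p}\equiv1\Mod{\mathfrak{n}\mathcal{O}_{K,\mathfrak{p}}}$ for $\mathfrak{p}\mid\mathfrak{n}$, which together with $K^*$ cuts out $K_\mathfrak{n}$ under the Artin map. Because $\mathfrak{a}^{-1}$ is an invertible $\mathcal{O}_K$-module, each completion $\mathfrak{a}^{-1}_p$ is free of rank one over $\mathcal{O}_{K,p}$; hence $q_{\xi,p}(\mathcal{O}_{K,p}^*)\subseteq\mathrm{GL}_2(\mathbb{Z}_p)$ and $q_\xi(s)\in U$. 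Taking $u=q_\xi(s)$ and $\beta=I$, the master identity collapses to $h_\mathfrak{n}(C)^{(s^{-1},K)}=h_{A\widetilde{u}}(\xi)$, and since $q_{\xi,p}$ is the $\mathbb{Z}_p$-linear regular representation of multiplication, writing $s_p=1+N\eta_p$ with $\eta_p\in\mathcal{O}_{K,p}$ gives $q_{\xi,p}(s_p)\equiv I\Mod{N}$. Thus $\widetilde{u}\equiv I\Mod{N}$, $h_{A\widetilde{u}}(\xi)=h_A(\xi)$, and $h_\mathfrak{n}(C)\in K_\mathfrak{n}$ by class field theory.

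For the action formula, given $C'\in\mathrm{Cl}(\mathfrak{n})$ I will choose a prime $\mathfrak{q}$ of $K$ in the class $C'$ coprime to $\mathfrak{n}$ (existing by density) and take $s$ to be the idele with $s_\mathfrak{q}$ a uniformizer and $s_\mathfrak{p}=1$ elsewhere, so that $(s^{-1},K)|_{K_\mathfrak{n}}=\sigma_\mathfrak{n}(C'^{-1})$ by Artin reciprocity. Since $s_p=1$ for every rational prime $p\neq q$, in particular for $p\mid N$, the relation $q_{\xi,p}(s_p)=I=u_p\beta$ forces $u_p=\beta^{-1}$ at all such $p$, whence $\widetilde{u}\equiv\beta^{-1}\Mod{N}$ and the master identity becomes
\[
h_\mathfrak{n}(C)^{\sigma_\mathfrak{n}(C'^{-1})}=h_{A\beta^{-1}}(\beta(\xi)).
\]
To identify the right-hand side with $h_\mathfrak{n}(CC')$, the local equalities $s_p\mathfrak{a}^{-1}_p=u_p\beta\mathfrak{a}^{-1}_p=\beta\mathfrak{a}^{-1}_p$ (since $u_p$ preserves the lattice $\mathfrak{a}^{-1}_p$) glue to the global identity $s\mathfrak{a}^{-1}=[\beta_{11}\xi_1+\beta_{12}\xi_2,\,\beta_{21}\xi_1+\beta_{22}\xi_2]$, a fractional ideal in the class $CC'$. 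Setting $\begin{bmatrix}\xi_1''\\\xi_2''\end{bmatrix}=\beta\begin{bmatrix}\xi_1\\\xi_2\end{bmatrix}$ yields $\xi''=\beta(\xi)\in\mathbb{H}$ and $A\beta^{-1}\begin{bmatrix}\xi_1''\\\xi_2''\end{bmatrix}=\begin{bmatrix}\tau_K\\1\end{bmatrix}$, exhibiting exactly the data that computes $h_\mathfrak{n}(CC')$.

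The main obstacles I anticipate are (i) justifying $q_\xi(\mathcal{O}_{K,p}^*)\subseteq\mathrm{GL}_2(\mathbb{Z}_p)$ and gluing the local lattice identities to a global equality of fractional ideals, which rests on $\mathfrak{a}^{-1}$ being an invertible $\mathcal{O}_K$-module; and (ii) applying the definition of $h_\mathfrak{n}(CC')$ to the fractional (not necessarily integral) representative $s^{-1}\mathfrak{a}$ of $(CC')^{-1}$. The second point can be resolved either by replacing $\mathfrak{a}$ in advance with $\lambda\mathfrak{a}$ for some $\lambda\equiv^*1\Mod{\mathfrak{n}}$ with $v_\mathfrak{q}(\lambda)$ large enough that $\mathfrak{q}\mid\lambda\mathfrak{a}$, or by noting that the well-definedness proof preceding this theorem goes through verbatim for any fractional ideal in $I_K(\mathfrak{n})$.
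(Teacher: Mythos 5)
Your proposal is correct in substance and runs on the same engine as the paper's proof --- Shimura's reciprocity law applied to $h_A(\xi)$ together with the decomposition $q_\xi(s)=u\beta$, $u\in U$, $\beta\in\mathrm{GL}_2^+(\mathbb{Q})$, and the observation that triviality of $s_p$ at $p\mid N$ pins down $\widetilde{u}$ modulo $N$ --- but it diverges from the paper in two ways that are worth noting. First, for the action formula the paper takes an \emph{integral} ideal $\mathfrak{a}'\in C'^{-1}$ and the idele $s$ with $s_p\mathcal{O}_p=\mathfrak{a}'_p$ for $p\nmid N$, then applies reciprocity to $s^{-1}$; this makes the target lattice $(\mathfrak{a}\mathfrak{a}')^{-1}$ contain $\mathfrak{a}^{-1}$, so the matrix $B$ (your $\beta^{-1}$) is automatically in $M_2^+(\mathbb{Z})$ and the data computing $h_\mathfrak{n}(CC')$ comes out integral with no patching. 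Your choice of a prime $\mathfrak{q}\in C'$ creates the non-integral representative $\mathfrak{q}^{-1}\mathfrak{a}$ of $(CC')^{-1}$, which you must then repair; your fix (replacing $\mathfrak{a}$ by $\lambda\mathfrak{a}$ with $\lambda\in\mathfrak{q}$, $\lambda\equiv^*1\Mod{\mathfrak{n}}$, licensed by the well-definedness proposition) does work, but the appeal to density of primes in ray classes is avoidable machinery --- any integral ideal of $C'$ prime to $\mathfrak{n}$ would serve your purpose, and the paper's choice of class ($C'^{-1}$ rather than $C'$) removes the problem altogether. Second, the paper obtains $h_\mathfrak{n}(C)\in K_\mathfrak{n}$ as a corollary of the action formula by setting $C'=C^{-1}$, reducing to $h_{I_2}(\tau_K)\in K_\mathfrak{n}$ via Proposition \ref{CM}; your direct argument, showing invariance under $(s^{-1},K)$ for $s$ in the idelic congruence subgroup cutting out $K_\mathfrak{n}$, is a legitimate alternative that trades the dependence on Proposition \ref{CM} for the idele-theoretic description of ray class fields, and the lattice-theoretic facts you flag ($q_{\xi,p}(\mathcal{O}_{K,p}^*)\subseteq\mathrm{GL}_2(\mathbb{Z}_p)$ and $q_{\xi,p}(1+N\mathcal{O}_{K,p})\subseteq I_2+NM_2(\mathbb{Z}_p)$, both because $[\xi,1]=\xi_2^{-1}\mathfrak{a}^{-1}$ is an $\mathcal{O}_K$-lattice) are routine to verify. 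In short: no gap, but a slightly less economical route than the one the paper takes.
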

\begin{proof}
Let $\mathfrak{a}$ and $\mathfrak{a}'$ be integral ideals in
$C^{-1}$ and $C'^{-1}$, respectively. Take
$\xi_1,\,\xi_2,\,\xi_1'',\,\xi_2''\in K^*$ so that
\begin{equation*}
\mathfrak{a}^{-1}=[\xi_1,\,\xi_2]\quad\textrm{with}~
\xi=\frac{\xi_1}{\xi_2}\in\mathbb{H},
\end{equation*}
and
\begin{equation*}
(\mathfrak{a}\mathfrak{a}')^{-1}=[\xi_1'',\,\xi_2'']
\quad\textrm{with}~\xi''=\frac{\xi_1''}{\xi_2''}\in\mathbb{H}.
\end{equation*}
Since $\mathcal{O}_K\subseteq\mathfrak{a}^{-1}\subseteq(\mathfrak{a}\mathfrak{a}')^{-1}$,
we may write
\begin{equation}\label{tAx}
\begin{bmatrix}\tau_K\\1\end{bmatrix}=
A\begin{bmatrix}\xi_1\\\xi_2\end{bmatrix}
\quad\textrm{for some}~A\in M_2^+(\mathbb{Z})
\end{equation}
and
\begin{equation}\label{xBx'}
\begin{bmatrix}
\xi_1\\\xi_2
\end{bmatrix}
=B\begin{bmatrix}
\xi_1''\\\xi_2''
\end{bmatrix}\quad\textrm{for some}~B\in M_2^+(\mathbb{Z}).
\end{equation}
Let $s$ be an idele of $K$ such that for every prime $p$
\begin{equation}\label{sp}
\left\{\begin{array}{ll}
s_p=1 & \textrm{if}~p\,|\,N,\\
s_p\mathcal{O}_p=\mathfrak{a}'_p & \textrm{if}~p\nmid N.
\end{array}\right.
\end{equation}
Now that $\mathfrak{a}'$ is prime to $\mathfrak{n}$, we get
\begin{equation}\label{sc}
s_p^{-1}\mathcal{O}_p
=\mathfrak{a}'^{-1}_p\quad\textrm{for all primes}~p.
\end{equation}
Observe that for every prime $p$
\begin{equation*}
q_{\xi,\,p}(s_p^{-1})\begin{bmatrix}\xi_1\\\xi_2\end{bmatrix}
=\xi_2q_{\xi,\,p}(s_p^{-1})\begin{bmatrix}\xi\\1\end{bmatrix}
=\xi_2s_p^{-1}\begin{bmatrix}\xi\\1\end{bmatrix}=
s_p^{-1}\begin{bmatrix}\xi_1\\\xi_2\end{bmatrix}.
\end{equation*}
Thus,
\begin{equation*}
B^{-1}\begin{bmatrix}\xi_1\\
\xi_2\end{bmatrix}\quad\textrm{and}\quad
q_{\xi,\,p}(s_p^{-1})\begin{bmatrix}\xi_1\\
\xi_2\end{bmatrix}
\end{equation*}
are bases for the $\mathbb{Z}_p$-module $(\mathfrak{a}
\mathfrak{a}')_p^{-1}$
by (\ref{xBx'}) and (\ref{sc}). So, there exists $u_p\in\mathrm{GL}_2(\mathbb{Z}_p)$ such that
\begin{equation}\label{upB}
q_{\xi,\,p}(s_p^{-1})=u_pB^{-1}.
\end{equation}
If we let
\begin{equation*}
u=(u_p)_p\in\prod_{p\,:\,\textrm{primes}}\mathrm{GL}_2(\mathbb{Z}_p),
\end{equation*}
then we obtain
\begin{equation}\label{uB}
q_\xi(s^{-1})=uB^{-1}.
\end{equation}
Now, we derive that
\begin{eqnarray*}
h_\mathfrak{n}(C)^{(s,\,K)}&=&h_A(\xi)^{(s,\,K)}
\quad\textrm{by Definition \ref{invariant}}\\
&=&h_A(\tau)^{\sigma_\mathrm{f}(q_\xi(s^{-1}))}|_{\tau=\xi}\quad
\textrm{by Proposition \ref{Shimura}}\\
&=&h_A(\tau)^{\sigma_\mathrm{f}(uB^{-1})}|_{\tau=\xi}\quad\textrm{by (\ref{uB})}\\
&=&h_{Au}(B^{-1}(\tau))|_{\tau=\xi}\quad
\textrm{by (\ref{Aut(F)}),}\\
&&\hspace{3.2cm}\textrm{where $u$ is regarded as an element of $\mathrm{GL}_2(\mathbb{Z}/N\mathbb{Z})/\{\pm I_2\}$}\\
&=&h_{AB}(B^{-1}(\tau))|_{\tau=\xi}\quad\textrm{because
for every prime divisor $p$ of $N$}\\
&&\hspace{3.3cm}\textrm{we have $s_p=1$ by (\ref{sp}), and so
$u_pB^{-1}=I_2$ by (\ref{upB})}\\
&=&h_{AB}(B^{-1}(\xi))\\
&=&h_{AB}(\xi'')\quad\textrm{by (\ref{xBx'})}\\
&=&h_\mathfrak{n}(CC')
\quad\textrm{since}~
\begin{bmatrix}
\tau_K\\1
\end{bmatrix}=AB\begin{bmatrix}\xi_1''\\\xi_2''\end{bmatrix}
~\textrm{by (\ref{tAx}) and (\ref{xBx'})}.
\end{eqnarray*}
In particular, if $C'=C^{-1}$, then we see that
\begin{equation*}
h_\mathfrak{n}(C)=h_\mathfrak{n}(CC')^{(s^{-1},\,K)}=
h_\mathfrak{n}(C_0)^{(s^{-1},\,K)}
=h_{I_2}(\tau_K)^{(s^{-1},\,K)}.
\end{equation*}
This implies that $h_\mathfrak{n}(C)$ belongs to $K_\mathfrak{n}$
because $h_{I_2}(\tau_K)$ lies in $K_\mathfrak{n}$ by Proposition \ref{CM}.
Since
$\mathrm{ord}_\mathfrak{p}~s_p=\mathrm{ord}_\mathfrak{p}~\mathfrak{a}'$ for all primes
$p$ such that $p\nmid N$
and prime ideals lying above $p$
by (\ref{sp}), we achieve
\begin{equation*}
(s,\,K)|_{K_\mathfrak{n}}=\sigma_\mathfrak{n}(C'^{-1}).
\end{equation*}
Therefore, we conclude
\begin{equation*}
h_\mathfrak{n}(C)^{\sigma_\mathfrak{n}(C'^{-1})}=
h_\mathfrak{n}(CC').
\end{equation*}
\end{proof}

Let $\min(\tau_K,\,\mathbb{Q})=x^2+b_Kx+c_K\in\mathbb{Z}[x]$, and so
\begin{equation*}
\tau_K=\frac{-b_K+\sqrt{d_K}}{2}.
\end{equation*}

\begin{theorem}\label{CGisomorphism}
We have an isomorphism of groups
\begin{eqnarray*}
\mathrm{C}_N(d_K)&\rightarrow&\mathrm{Gal}(K_\mathfrak{n}/K)\\
\mathrm{[}ax^2+bxy+cy^2\mathrm{]}&\mapsto&
\left(
h(\tau_K)\mapsto h_{\left[
\begin{smallmatrix}a&(b-b_K)/2\\0&1\end{smallmatrix}\right]}
(\tfrac{-b+\sqrt{d_K}}{2a})~|~h(\tau)\in\mathcal{F}_N~
\textrm{is finite at $\tau_K$}\right).
\end{eqnarray*}
\end{theorem}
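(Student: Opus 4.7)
The strategy is to realize the displayed map as the composition
\begin{equation*}
\mathrm{C}_N(d_K) \xrightarrow{\phi_N} \mathrm{Cl}(\mathfrak{n}) \xrightarrow{C\mapsto C^{-1}} \mathrm{Cl}(\mathfrak{n}) \xrightarrow{\sigma_\mathfrak{n}} \mathrm{Gal}(K_\mathfrak{n}/K),
\end{equation*}
where $\phi_N$ is the isomorphism of Theorem \ref{group}, the inversion is an isomorphism because $\mathrm{Cl}(\mathfrak{n})$ is abelian, and $\sigma_\mathfrak{n}$ is the Artin reciprocity map. This composition is automatically a group isomorphism; the remaining task is to recognize its image as the action stated in the theorem.

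Setting $C = C_0$ in Theorem \ref{Artin}, together with $h_\mathfrak{n}(C_0) = h(\tau_K)$, gives
\begin{equation*}
h(\tau_K)^{\sigma_\mathfrak{n}(C'^{-1})} = h_\mathfrak{n}(C')\quad(C'\in\mathrm{Cl}(\mathfrak{n})).
\end{equation*}
Thus, writing $C = \phi_N([Q])$, it suffices to prove the explicit equality $h_\mathfrak{n}(C) = h_A(\omega_Q)$ with $A$ as in the statement, since any $h(\tau) \in \mathcal{F}_N$ finite at $\tau_K$ is the $h_{I_2}$ of the Fricke family $\{h^\beta\}_\beta$.

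The heart of the matter is selecting a suitable integral representative of $C^{-1}$ for use in Definition \ref{invariant}. I would take $\mathfrak{a} = a\overline{[\omega_Q,1]} = [(-b-\sqrt{d_K})/2,\, a]$. Three brief verifications are in order: (i) $\mathfrak{a}$ is integral because $(-b-\sqrt{d_K})/2 = -(b+b_K)/2 - \tau_K$ with $b \equiv b_K \Mod{2}$ (both squares being $\equiv d_K \Mod{4}$); (ii) Lemma \ref{elementary} (iii) yields $[\omega_Q,1]\,\overline{[\omega_Q,1]} = (1/a)\mathcal{O}_K$, so $\mathfrak{a}^{-1} = [\omega_Q,1]$ and $\mathfrak{a} \in C^{-1}$; (iii) $\mathrm{N}_{K/\mathbb{Q}}(\mathfrak{a}) = a$ is coprime to $N$, hence $\mathfrak{a}$ is prime to $\mathfrak{n}$.

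With $\xi_1 = \omega_Q$, $\xi_2 = 1$, $\xi = \omega_Q$, the matrix $A \in M_2^+(\mathbb{Z})$ required by Definition \ref{invariant} is determined by comparing rational and $\sqrt{d_K}$-parts in $\tau_K = r\omega_Q + s$, which forces $(r,s) = (a,(b-b_K)/2)$, and by $1 = u\omega_Q + v$, which forces $(u,v) = (0,1)$. Hence $A = \left[\begin{smallmatrix}a & (b-b_K)/2\\0 & 1\end{smallmatrix}\right]$ with $\det A = a$ prime to $N$, and Definition \ref{invariant} concludes $h_\mathfrak{n}(C) = h_A(\omega_Q)$. The only real subtlety I anticipate is the choice of integral ideal: $[\omega_Q,1]$ is not itself integral, and the naive integral representative $a[\omega_Q,1]$ lies in the wrong ray class in general; inverting to the conjugate form $a\overline{[\omega_Q,1]}$ is precisely what produces a clean upper-triangular $A$ matching the statement.
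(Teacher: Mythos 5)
Your proposal is correct and follows essentially the same route as the paper: factor the map as $\sigma_\mathfrak{n}\circ(\,\cdot\,)^{-1}\circ\phi_N$, then use Theorem \ref{Artin} with $C=C_0$ and compute $h_\mathfrak{n}(\phi_N([Q]))=h_A(\omega_Q)$ via Definition \ref{invariant}. The only (harmless) divergence is your choice of integral representative $\mathfrak{a}=[\omega_Q,1]^{-1}=a\overline{[\omega_Q,1]}$, which yields $A=\left[\begin{smallmatrix}a&(b-b_K)/2\\0&1\end{smallmatrix}\right]$ on the nose, whereas the paper uses $\mathfrak{a}=a^{\varphi(N)}[\omega_Q,1]^{-1}$ and reduces the resulting matrix modulo $N$.
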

\begin{proof}
Let $Q(x,\,y)=ax^2+bxy+cy^2\in\mathcal{Q}_N(d_K)$. Then,
$C=\phi_N([Q])$ is the ray class containing
the fractional ideal
$\mathfrak{c}=[\omega_Q,\,1]$.
Since
\begin{equation*}
\mathfrak{c}^{-1}=\frac{1}{\mathrm{N}_{K/\mathbb{Q}}(\mathfrak{c})}
\overline{\mathfrak{c}}=\frac{1}{a}[-\overline{\omega}_Q,\,1]
\end{equation*}
by Lemma \ref{elementary} (iii),
$\mathfrak{a}=a^{\varphi(N)}\mathfrak{c}^{-1}$
is an integral ideal in $C^{-1}$.
It then follows that
\begin{equation*}
\mathfrak{a}^{-1}=\frac{1}{a^{\varphi(N)}}\mathfrak{c}=
\frac{1}{a^{\varphi(N)}}[\omega_Q,\,1]
\end{equation*}
and
\begin{equation*}
\begin{bmatrix}\tau_K\\1\end{bmatrix}=
\begin{bmatrix}
a^{\varphi(N)+1} & a^{\varphi(N)}(b-b_K)/2\\
0&a^{\varphi(N)}
\end{bmatrix}
\begin{bmatrix}
\omega_Q/a^{\varphi(N)}\\1/a^{\varphi(N)}
\end{bmatrix}.
\end{equation*}
Since $a^{\varphi(N)}\equiv1\Mod{N}$, we have
\begin{equation*}
h_\mathfrak{n}(C)=h_{\left[\begin{smallmatrix}
a & (b-b_K)/2\\
0&1
\end{smallmatrix}\right]}(\omega_Q).
\end{equation*}
Now, by composing two isomorphisms
\begin{eqnarray*}
\mathrm{C}_N(d_K)&\rightarrow&\mathrm{Cl}(\mathfrak{n})\\
\mathrm{[}ax^2+bxy+cy^2\mathrm{]}
&\mapsto&\textrm{ray class containing}~\mathrm{[}
(-b+\sqrt{d_K})/2a,\,1\mathrm{]}
\end{eqnarray*}
given in Theorem \ref{group} and
\begin{eqnarray*}
\mathrm{Cl}(\mathfrak{n})&\rightarrow&\mathrm{Gal}(K_\mathfrak{n}/K)\\
C&\mapsto&
\left(
h(\tau_K)=h_\mathfrak{n}(C_0)\mapsto
h_\mathfrak{n}(C_0)^{\sigma_\mathfrak{n}(C^{-1})}
=h_\mathfrak{n}(C)
~|~h(\tau)\in\mathcal{F}_N~\textrm{is finite at}~\tau_K\right)
\end{eqnarray*}
obtained by Theorem \ref{Artin}, we establish the theorem.
\end{proof}

\section {Explicit construction of extended form class groups}

In this section, we shall explain how to find representatives of forms classes in $\mathrm{C}_N(d_K)$.

\begin{lemma}\label{prime}
Let $Q(x,\,y)=ax^2+bxy+cy^2\in\mathcal{Q}_N(d_K)$ and
$u,\,v\in\mathbb{Z}$. Then, the fractional ideal
$(u\omega_Q+v)\mathcal{O}_K$ is prime to $\mathfrak{n}$
if and only if $Q(v,\,-u)$ is prime to $N$.
\end{lemma}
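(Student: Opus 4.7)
The plan is to reduce the statement to a computation of the absolute norm of the element $u\omega_Q+v$, and then invoke the standard principle that an integral ideal in $\mathcal{O}_K$ is prime to $N\mathcal{O}_K$ if and only if its absolute norm is coprime to $N$.

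First I would observe that $\omega_Q=\frac{-b+\sqrt{d_K}}{2a}$, so
\[
N_{K/\mathbb{Q}}(u\omega_Q+v)=(u\omega_Q+v)(u\overline{\omega}_Q+v)=u^2\omega_Q\overline{\omega}_Q+uv(\omega_Q+\overline{\omega}_Q)+v^2=\frac{c}{a}u^2-\frac{b}{a}uv+v^2=\frac{Q(v,-u)}{a}.
\]
Next, since $a\omega_Q=\frac{-b+\sqrt{d_K}}{2}\in\mathcal{O}_K$, the element $a(u\omega_Q+v)=u\cdot\tfrac{-b+\sqrt{d_K}}{2}+av$ is an algebraic integer, so $\mathfrak{b}:=a(u\omega_Q+v)\mathcal{O}_K$ is an \emph{integral} ideal of $K$ whose absolute norm is $a^2\cdot N_{K/\mathbb{Q}}(u\omega_Q+v)=aQ(v,-u)$.

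The hypothesis $\gcd(N,a)=1$ (coming from $Q\in\mathcal{Q}_N(d_K)$) means that $a\mathcal{O}_K$ is prime to $\mathfrak{n}$, so the fractional ideal $(u\omega_Q+v)\mathcal{O}_K$ is prime to $\mathfrak{n}$ if and only if the integral ideal $\mathfrak{b}$ is prime to $\mathfrak{n}$. For an integral ideal $\mathfrak{I}$ of $\mathcal{O}_K$, the condition $\mathfrak{I}+N\mathcal{O}_K=\mathcal{O}_K$ holds exactly when no prime $\mathfrak{p}$ of $\mathcal{O}_K$ lying above a rational prime divisor of $N$ divides $\mathfrak{I}$, which is equivalent to $\gcd(N_{K/\mathbb{Q}}(\mathfrak{I}),N)=1$. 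Applying this to $\mathfrak{b}$, we obtain
\[
(u\omega_Q+v)\mathcal{O}_K~\text{is prime to}~\mathfrak{n}\iff\gcd(aQ(v,-u),N)=1\iff\gcd(Q(v,-u),N)=1,
\]
where the last equivalence again uses $\gcd(N,a)=1$. This is precisely the claim of the lemma.

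The only mildly delicate point is the clean passage from the fractional ideal $(u\omega_Q+v)\mathcal{O}_K$ to an integral ideal, which is handled by multiplying by $a\mathcal{O}_K$ (prime to $\mathfrak{n}$). The norm calculation itself is routine, and the equivalence between coprimality of ideals and coprimality of norms is standard for ideals of a number field; no deeper input is needed.
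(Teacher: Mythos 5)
Your proof is correct and follows essentially the same route as the paper: multiply by $a$ (prime to $N$) to pass to the integral ideal $a(u\omega_Q+v)\mathcal{O}_K$, compute its absolute norm to be $aQ(v,-u)$, and invoke the standard criterion that an integral ideal is prime to $N\mathcal{O}_K$ exactly when its norm is prime to $N$. The only cosmetic difference is that you compute $\mathrm{N}_{K/\mathbb{Q}}(u\omega_Q+v)=Q(v,-u)/a$ first and then scale, whereas the paper computes $\mathrm{N}_{K/\mathbb{Q}}(a(u\omega_Q+v))$ directly.
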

\begin{proof}
We deduce from the facts $\gcd(N,\,a)=1$ and $\mathfrak{n}=N\mathcal{O}_K$ that
\begin{eqnarray*}
&&(u\omega_Q+v)\mathcal{O}_K~\textrm{is prime to}~\mathfrak{n}\\
&\Longleftrightarrow&\textrm{the integral ideal}~
a(u\omega_Q+v)\mathcal{O}_K~\textrm{is prime to}~\mathfrak{n}\\
&\Longleftrightarrow&
\mathrm{N}_{K/\mathbb{Q}}(a(u\omega_Q+v))~\textrm{is prime to}~N.
\end{eqnarray*}
Hence, we obtain that
\begin{eqnarray*}
\mathrm{N}_{K/\mathbb{Q}}(a(u\omega_Q+v))&=&
a^2(u\omega_Q+v)(u\overline{\omega}_Q+v)\\
&=&a^2(u^2\omega_Q\overline{\omega}_Q+uv(\omega_Q+\overline{\omega}_Q)+v^2)\\
&=&a^2(u^2(c/a)+uv(-b/a)+v^2)\\
&=&a(cu^2-buv+av^2)\\
&=&aQ(v,\,-u).
\end{eqnarray*}
This proves the lemma.
\end{proof}

Let $P_K(\mathfrak{n})$ be the subgroup of $I_K(\mathfrak{n})$
consisting of principal fractional ideals prime to $\mathfrak{n}$.

\begin{lemma}\label{Cuv}
Let $Q(x,\,y)=ax^2+bxy+cy^2\in\mathcal{Q}_N(d_K)$ and
$C\in P_K(\mathfrak{n})/P_{K,\,1}(\mathfrak{n})\subseteq\mathrm{Cl}(\mathfrak{n})$.
Then we have
\begin{equation*}
C=[(u\omega_Q+v)\mathcal{O}_K]\quad
\textrm{for some $u,\,v\in\mathbb{Z}$
such that}~\gcd(N,\,Q(v,\,-u))=1.
\end{equation*}
\end{lemma}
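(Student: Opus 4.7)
The plan is to show that every class in $P_K(\mathfrak{n})/P_{K,\,1}(\mathfrak{n})$ admits a representative of the form $\gamma\mathcal{O}_K$ with $\gamma\in\mathcal{O}_K$ and $\gamma\mathcal{O}_K$ prime to $\mathfrak{n}$, and then to expand such a $\gamma$ in a $\mathbb{Z}$-basis of $\mathcal{O}_K$ adapted to the form $Q$.

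First, I would pick $\lambda\in K^*$ with $\lambda\mathcal{O}_K\in C$; by definition of $P_K(\mathfrak{n})$, the fractional ideal $\lambda\mathcal{O}_K$ is prime to $\mathfrak{n}$. Let $\mathfrak{d}$ be the denominator ideal of $\lambda$, i.e., the integral ideal with $\mathrm{ord}_\mathfrak{p}(\mathfrak{d})=\max(0,\,-\mathrm{ord}_\mathfrak{p}(\lambda))$ at every prime $\mathfrak{p}$. Because $\lambda\mathcal{O}_K$ is prime to $\mathfrak{n}$, so is $\mathfrak{d}$. Applying the Chinese remainder theorem to the coprime integral ideals $\mathfrak{d}$ and $\mathfrak{n}$ produces $\mu\in\mathcal{O}_K$ with $\mu\equiv0\Mod{\mathfrak{d}}$ and $\mu\equiv1\Mod{\mathfrak{n}}$. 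Then $\gamma:=\mu\lambda$ lies in $\mathcal{O}_K$ (since $\mu\in\mathfrak{d}$ absorbs the denominators of $\lambda$), the ideal $\mu\mathcal{O}_K$ belongs to $P_{K,\,1}(\mathfrak{n})$, and hence $\gamma\mathcal{O}_K$ still represents the ray class $C$ and is prime to $\mathfrak{n}$.

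Second, I would invoke the identity $a\omega_Q=\tau_K+(b_K-b)/2$, which holds in $\mathcal{O}_K$ because $b\equiv b_K\Mod{2}$ (both $b^2$ and $b_K^2$ reduce to $d_K$ modulo $4$). The change of basis from $\{1,\,\tau_K\}$ to $\{1,\,a\omega_Q\}$ is upper triangular with unit diagonal, so $\{1,\,a\omega_Q\}$ is a $\mathbb{Z}$-basis of $\mathcal{O}_K=[\tau_K,\,1]$. Consequently every element of $\mathcal{O}_K$, in particular the $\gamma$ from the previous step, admits an expansion $\gamma=v+s(a\omega_Q)=u\omega_Q+v$ with $v\in\mathbb{Z}$ and $u=as\in\mathbb{Z}$, whence $C=[(u\omega_Q+v)\mathcal{O}_K]$.

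Finally, since $(u\omega_Q+v)\mathcal{O}_K=\gamma\mathcal{O}_K$ is prime to $\mathfrak{n}$, Lemma \ref{prime} immediately yields $\gcd(N,\,Q(v,\,-u))=1$. The only genuinely delicate point is the first step: replacing the arbitrary fractional representative $\lambda\mathcal{O}_K$ by an integral principal representative in the same ray class via CRT. Once that is in hand, the basis identity $\mathcal{O}_K=\mathbb{Z}\cdot 1+\mathbb{Z}\cdot a\omega_Q$ and Lemma \ref{prime} dispatch the rest of the argument routinely.
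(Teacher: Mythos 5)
Your proposal is correct and follows the same route as the paper: take an integral (hence principal, generated by some $\gamma\in\mathcal{O}_K$) representative of $C$, expand $\gamma$ in the basis $\mathcal{O}_K=[a\omega_Q,\,1]$ to get $\gamma=u\omega_Q+v$ with $a\mid u$, and finish with Lemma \ref{prime}. The only difference is that you spell out, via CRT, why an integral representative exists and why $\{1,\,a\omega_Q\}$ is a $\mathbb{Z}$-basis, both of which the paper takes for granted.
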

\begin{proof}
Take an integral ideal $\mathfrak{c}$ in $C$. Since
$\mathcal{O}_K=[a\omega_Q,\,1]$, we get
\begin{equation*}
\mathfrak{c}=(ta\omega_Q+v)\mathcal{O}_K\quad
\textrm{for some}~t,\,v\in\mathbb{Z}.
\end{equation*}
Set $u=ta$. Then, the lemma follows from Lemma \ref{prime}.
\end{proof}

Define an equivalence relation $\equiv_N$ on $\mathbb{Z}^2$ by
\begin{equation*}
\begin{bmatrix}r\\s\end{bmatrix}
\equiv_N\begin{bmatrix}u\\v\end{bmatrix}
\quad\Longleftrightarrow\quad
\begin{bmatrix}r\\s\end{bmatrix}\equiv\pm\begin{bmatrix}u\\v\end{bmatrix}
\Mod{N}.
\end{equation*}

\begin{lemma}\label{samerayclass}
Let $Q(x,\,y)=ax^2+bxy+cy^2\in\mathcal{Q}_N(d_K)$,
and let
$\begin{bmatrix}r\\s\end{bmatrix},\,
\begin{bmatrix}u\\v\end{bmatrix}\in\mathbb{Z}^2$
such that $\gcd(N,\,Q(s,\,-r))=\gcd(N,\,Q(v,\,-u))=1$.
Then, $(r\omega_Q+s)\mathcal{O}_K$ and $(u\omega_Q+v)\mathcal{O}_K$
represent the same ray class in $\mathrm{Cl}(\mathfrak{n})$ if and only if
\begin{equation*}
\begin{bmatrix}r\\s\end{bmatrix}\equiv_N\begin{bmatrix}u\\v\end{bmatrix}.
\end{equation*}
\end{lemma}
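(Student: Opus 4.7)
The plan is to reduce the ray-class equivalence to an explicit coordinate congruence inside $\mathcal{O}_K$. By Lemma \ref{prime}, the hypotheses $\gcd(N,Q(s,-r))=\gcd(N,Q(v,-u))=1$ guarantee that both $(r\omega_Q+s)\mathcal{O}_K$ and $(u\omega_Q+v)\mathcal{O}_K$ lie in $I_K(\mathfrak{n})$, so the statement is well-posed.

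First, since $K\neq\mathbb{Q}(\sqrt{-1}),\mathbb{Q}(\sqrt{-3})$ we have $\mathcal{O}_K^*=\{\pm1\}$, so these two principal ideals represent the same ray class in $\mathrm{Cl}(\mathfrak{n})$ if and only if
\[
r\omega_Q+s=\pm\lambda(u\omega_Q+v)\quad\textrm{for some}~\lambda\in K^*~\textrm{with}~\lambda\equiv^*1\Mod{\mathfrak{n}}.
\]
Since $\gcd(N,a)=1$, the elements $a(r\omega_Q+s)$ and $a(u\omega_Q+v)$ both lie in $\mathcal{O}_K$ and are prime to $\mathfrak{n}$, so this multiplicative condition is equivalent to the ordinary congruence
\[
\pm a(r\omega_Q+s)\equiv a(u\omega_Q+v)\Mod{\mathfrak{n}}.
\]

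Next, I would exploit the identity $a\omega_Q=\tau_K+(b_K-b)/2$, which holds because $b\equiv b_K\Mod{2}$ (both $b^2$ and $b_K^2$ reduce to $d_K\Mod{4}$), to expand the difference in the $\mathbb{Z}$-basis $\{\tau_K,1\}$ of $\mathcal{O}_K$. A direct calculation gives
\[
\pm a(r\omega_Q+s)-a(u\omega_Q+v)=(\pm r-u)\tau_K+\left[(\pm r-u)\frac{b_K-b}{2}+a(\pm s-v)\right].
\]
Reading off coordinates, membership in $N\mathcal{O}_K$ becomes $\pm r\equiv u\Mod{N}$ together with $(\pm r-u)(b_K-b)/2+a(\pm s-v)\equiv0\Mod{N}$; the first condition reduces the second, via $\gcd(N,a)=1$, to $\pm s\equiv v\Mod{N}$, with the same choice of sign.

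The one delicate point is tracking the sign ambiguity arising from $\mathcal{O}_K^*=\{\pm 1\}$: the $\pm$ must be chosen globally, forcing the same sign in both coordinates simultaneously, which is precisely what the relation $\equiv_N$ encodes. Once this is observed, the equivalence $\begin{bmatrix}r\\s\end{bmatrix}\equiv_N\begin{bmatrix}u\\v\end{bmatrix}$ follows at once from the chain of equivalences above.
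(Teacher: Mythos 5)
Your proof is correct and follows essentially the same route as the paper's: reduce the ray-class condition to $a(r\omega_Q+s)\equiv^*\pm a(u\omega_Q+v)\Mod{\mathfrak{n}}$ using $\mathcal{O}_K^*=\{\pm1\}$ and $\gcd(N,a)=1$, then read off coordinates in a $\mathbb{Z}$-basis of $\mathcal{O}_K$. The only cosmetic difference is that you expand in the basis $\{\tau_K,\,1\}$ via $a\omega_Q=\tau_K+(b_K-b)/2$, whereas the paper works directly with the basis $\{a\omega_Q,\,1\}$.
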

\begin{proof}
By Lemma \ref{prime}, both
$(r\omega_Q+s)\mathcal{O}_K$ and
$(u\omega_Q+v)\mathcal{O}_K$ are prime to $\mathfrak{n}$.
Then we see that
\begin{eqnarray*}
&&\textrm{$(r\omega_Q+s)\mathcal{O}_K$
and $(u\omega_Q+v)\mathcal{O}_K$ represent the same ray class in $\mathrm{Cl}(\mathfrak{n})$}\\
&\Longleftrightarrow&
\left(\frac{r\omega_Q+s}{u\omega_Q+v}\right)\mathcal{O}_K\in P_{K,\,1}(\mathfrak{n})\\
&\Longleftrightarrow&
\frac{r\omega_Q+s}{u\omega_Q+v}\equiv^*\pm1\Mod{\mathfrak{n}}\quad
\textrm{because}~\mathcal{O}_K^*=\{1,\,-1\}\\
&\Longleftrightarrow&
a(r\omega_Q+s)\equiv^*\pm a(u\omega_Q+v)\Mod{\mathfrak{n}}\\
&\Longleftrightarrow&
(r\pm u)(a\omega_Q)+(s\pm v)a\in\mathfrak{n}
\quad\textrm{since}~a\omega_Q\in\mathcal{O}_K\\
&\Longleftrightarrow&
r\pm u\equiv(s\pm v)a\equiv0\Mod{N}\quad\textrm{due to}~
\mathfrak{n}=N\mathcal{O}_K=[Na\omega_Q,\,N]\\
&\Longleftrightarrow&
\begin{bmatrix}r\\s\end{bmatrix}
\equiv\pm\begin{bmatrix}u\\v\end{bmatrix}\Mod{N}
\quad\textrm{by the fact}~\gcd(N,\,a)=1\\
&\Longleftrightarrow&
\begin{bmatrix}r\\s\end{bmatrix}
\equiv_N\begin{bmatrix}u\\v\end{bmatrix}.
\end{eqnarray*}
\end{proof}

\begin{proposition}\label{list}
One can list elements of $\mathrm{C}_N(d_K)$ explicitly.
\end{proposition}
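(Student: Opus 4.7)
The plan is to combine the isomorphism $\phi_N$ of Theorem \ref{group} with the standard short exact sequence
\begin{equation*}
1 \longrightarrow P_K(\mathfrak{n})/P_{K,\,1}(\mathfrak{n}) \longrightarrow \mathrm{Cl}(\mathfrak{n}) \longrightarrow \mathrm{Cl}(\mathcal{O}_K) \longrightarrow 1,
\end{equation*}
thereby reducing the enumeration of $\mathrm{C}_N(d_K)$ to the classical enumeration of $\mathrm{C}(d_K)$ together with, above each base class, a finite combinatorial fiber computable from Lemmas \ref{prime}--\ref{samerayclass}.

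First, I would list representatives $Q_1,\ldots,Q_h \in \mathcal{Q}(d_K)$ of the ordinary form class group $\mathrm{C}(d_K)$ via the classical reduction theory of binary quadratic forms (see \cite{Cox}). By replacing each $Q_i$ with a properly equivalent form if necessary, one may arrange that its leading coefficient is coprime to $N$, because every class in $\mathrm{C}(d_K)$ represents infinitely many integers prime to any prescribed modulus. Then each $Q_i$ lies in $\mathcal{Q}_N(d_K)$, and under $\phi_N$ its class pulls back the entire fiber of $\mathrm{Cl}(\mathfrak{n}) \to \mathrm{Cl}(\mathcal{O}_K)$ above $\phi([Q_i])$.

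Second, fix $Q_i = a_i x^2 + b_i xy + c_i y^2$. By Lemma \ref{Cuv} every ray class in that fiber admits a representative of the form $(u\omega_{Q_i}+v)[\omega_{Q_i},\,1]$ with $\gcd(N,\,Q_i(v,\,-u)) = 1$, and by Lemma \ref{samerayclass} two such representatives give the same ray class precisely when $(u,\,v) \equiv_N (u',\,v')$. Hence the fiber over $[Q_i]$ is in explicit bijection with the finite set
\begin{equation*}
\mathcal{P}_i = \left\{\, [u,\,v] \in (\mathbb{Z}/N\mathbb{Z})^2/\!\equiv_N \,:\, \gcd(N,\,Q_i(v,\,-u)) = 1 \,\right\},
\end{equation*}
which one enumerates directly by scanning pairs modulo $N$ and testing the coprimality condition.

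Finally, to produce an honest \emph{form} representative attached to each pair $(Q_i,\,[u,\,v])$, I would convert the fractional ideal $\mathfrak{c} = (u\omega_{Q_i}+v)[\omega_{Q_i},\,1]$ back into an element of $\mathcal{Q}_N(d_K)$ by running the constructive argument in the proof of Proposition \ref{surjective} on any integral ideal $\mathfrak{a}$ in the class of $\mathfrak{c}^{-1}$: write $\mathfrak{a}^{-1} = [\xi_1,\,\xi_2]$ with $\xi_1/\xi_2 \in \mathbb{H}$, solve $1 = u'\xi_1 + v'\xi_2$ over $\mathbb{Z}$, lift $(u',\,v')$ to a matrix $\sigma \in \mathrm{SL}_2(\mathbb{Z})$ via the surjection $\mathrm{SL}_2(\mathbb{Z}) \twoheadrightarrow \mathrm{SL}_2(\mathbb{Z}/N\mathbb{Z})$, and read off the form whose root in $\mathbb{H}$ equals $\sigma(\xi_1/\xi_2)$. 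Ranging over all $i$ and all $[u,\,v] \in \mathcal{P}_i$ then yields a complete list of $|\mathrm{Cl}(\mathfrak{n})| = |\mathrm{C}_N(d_K)|$ form classes without repetition. The main obstacle is purely bookkeeping — securing the initial representatives inside $\mathcal{Q}_N(d_K)$ and keeping the modular arithmetic for $\equiv_N$ consistent — since no new theory is needed beyond this section's lemmas and the constructive surjectivity already built into the proof of Proposition \ref{surjective}.
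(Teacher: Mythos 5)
Your proposal is correct and follows essentially the same route as the paper: list representatives of $\mathrm{C}(d_K)$ adjusted to have leading coefficient prime to $N$, parametrize each fiber of $\mathrm{Cl}(\mathfrak{n})\rightarrow\mathrm{Cl}(\mathcal{O}_K)$ by classes $[u,\,v]$ modulo $\equiv_N$ using Lemmas \ref{prime}--\ref{samerayclass}, and convert each resulting ray class back into a form via the constructive surjectivity. The only (cosmetic) difference is the final step, where the paper lifts $(u,\,v)$ directly to a matrix $\rho\in\mathrm{SL}_2(\mathbb{Z})$ and takes the form $Q_i'\left(\rho^{-1}\begin{bmatrix}x\\y\end{bmatrix}\right)$ instead of re-running the full argument of Proposition \ref{surjective} on an auxiliary integral ideal.
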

\begin{proof}
Note first that
\begin{equation}\label{PG}
\mathrm{C}(d_K)\simeq\mathrm{Gal}(K_\mathfrak{n}/K)/
\mathrm{Gal}(K_\mathfrak{n}/H_K)
\quad\textrm{and}\quad
P_K(\mathfrak{n})/P_{K,\,1}(\mathfrak{n})\simeq\mathrm{Gal}(K_\mathfrak{n}/H_K).
\end{equation}
One can readily find reduced forms $Q_1,\,Q_2,\,\ldots,\,Q_n$ in
$\mathcal{Q}(d_K)$ which represent all classes in $\mathrm{C}(d_K)$
(\cite[Theorem 2.8]{Cox}). Take $\sigma_1,\,\sigma_2,\,\ldots,\,\sigma_n\in
\mathrm{SL}_2(\mathbb{Z})$ for which
\begin{equation*}
Q_i'\left(\begin{bmatrix}x\\y\end{bmatrix}\right)
=Q_i\left(\sigma_i\begin{bmatrix}
x\\y\end{bmatrix}\right)\quad(i=1,\,2,\,\ldots,\,n)
\end{equation*}
belong to $\mathcal{Q}_N(d_K)$ (\cite[Lemmas 2.3 and 2.25]{Cox}).
Then,
\begin{equation*}
\left\{\left[[\omega_{Q_i'},\,1]\right]\in\mathrm{Cl}(\mathfrak{n})~|~
i=1,\,2,\,\ldots,\,n\right\}
\end{equation*}
is a subset of $\mathrm{Cl}(\mathfrak{n})$
whose image under $\mathrm{Cl}(\mathfrak{n})\rightarrow
\mathrm{Cl}(\mathcal{O}_K)$ is all of $\mathrm{Cl}(\mathcal{O}_K)$.
Furthermore, for each $i\in\{1,\,2,\,\ldots,\,n\}$, we attain
by Lemmas \ref{prime}, \ref{Cuv} and \ref{samerayclass} that
\begin{equation}\label{PP}
P_K(\mathfrak{n})/P_{K,\,1}(\mathfrak{n})=\left\{
\left[(u\omega_{Q_i'}+v)\mathcal{O}_K\right]~|~
\begin{bmatrix}u\\v\end{bmatrix}\in\mathbb{Z}^2/\equiv_N~\textrm{such
that}~\gcd(N,\,Q_i'(v,-u))=1\right\}.
\end{equation}
\par
Now, let $C\in\mathrm{Cl}(\mathfrak{n})$.
By (\ref{PG}) and (\ref{PP}), there
is one and only one pair
$i\in\{1,\,2,\,\ldots,\,n\}$ and
$\begin{bmatrix}u\\v\end{bmatrix}\in\mathbb{Z}^2/\equiv_N$
with $\gcd(N,\,Q_i'(v,\,-u))=1$ so that
\begin{equation*}
C=\left[\frac{1}{u\omega_{Q_i'}+v}[\omega_{Q_i'},\,1]\right].
\end{equation*}
Take a matrix $\rho$ in $\mathrm{SL}_2(\mathbb{Z})$ satisfying
\begin{equation*}
\rho\equiv\begin{bmatrix}\mathrm{*}&\mathrm{*}\\
u&v\end{bmatrix}\Mod{N}.
\end{equation*}
Since
\begin{equation*}
\frac{j(\rho,\,\omega_{Q_i'})}{u\omega_{Q_i'}+v}\equiv^*1\Mod{\mathfrak{n}},
\end{equation*}
we get by Lemma \ref{elementary} (i) that
\begin{equation*}
C=\left[
\frac{1}{j(\rho,\,\omega_{Q_i'})}[\omega_{Q_i'},\,1]\right]
=\left[[\rho(\omega_{Q_i'}),\,1]\right].
\end{equation*}
If $\widetilde{Q}\in\mathcal{Q}_N(d_K)$ such that
$\omega_{\widetilde{Q}}=\rho(\omega_{Q_i'})$, then
we attain
\begin{equation*}
C=\phi_N([\widetilde{Q}])=
\phi_N\left(\left[Q_i'\left(\rho^{-1}\begin{bmatrix}x\\y\end{bmatrix}\right)\right]\right).
\end{equation*}
This completes the proof.
\end{proof}

\begin{example}
Let $K=\mathbb{Q}(\sqrt{-2})$ and $N=3$. There
is only one reduced form
\begin{equation*}
Q_1=x^2+2y^2
\end{equation*}
of discriminant $d_K=-8$. Set $Q_1'=Q_1$.
By the proof of Proposition \ref{list} one can find two representatives $\widetilde{Q}_1$, $\widetilde{Q}_2$ of form classes in $\mathrm{C}_3(-8)$ whose zeros are
\begin{equation*}
\begin{bmatrix}1&0\\0&1\end{bmatrix}(\omega_{Q_1'})
\quad\textrm{and}\quad
\begin{bmatrix}0&-1\\1&0\end{bmatrix}(\omega_{Q_1'}).
\end{equation*}
And, we obtain
\begin{equation*}
\mathrm{C}_3(-8)=\{[x^2+2y^2],~[2x^2+y^2]\},
\end{equation*}
and hence $\mathrm{C}_3(-8)\simeq\mathbb{Z}/2\mathbb{Z}$.
\end{example}

\begin{example}\label{-20}
Let $K=\mathbb{Q}(\sqrt{-5})$ and $N=2$.
Then there are two reduced forms of discriminant $d_K=-20$, namely,
\begin{equation*}
Q_1=x^2+5y^2\quad\textrm{and}\quad
Q_2=2x^2+2xy+3y^2.
\end{equation*}
Let
\begin{equation*}
Q_1'=Q_1\quad\textrm{and}\quad
Q_2'=Q_2\left(\begin{bmatrix}0&-1\\1&0\end{bmatrix}
\begin{bmatrix}x\\y\end{bmatrix}
\right)=3x^2-2xy+2y^2
\end{equation*}
with
\begin{equation*}
\omega_{Q_1'}=\sqrt{-5}\quad\textrm{and}\quad
\omega_{Q_2'}=\frac{1+\sqrt{-5}}{3}.
\end{equation*}
By the proof of Proposition \ref{list} we have
four representatives
$\widetilde{Q}_1$, $\widetilde{Q}_2$, $\widetilde{Q}_3$, $\widetilde{Q}_4$ of form classes in $\mathrm{C}_2(-20)$
with zeros
\begin{equation*}
\begin{bmatrix}
1&0\\0&1
\end{bmatrix}(\omega_{Q_1'}),~
\begin{bmatrix}0&-1\\1&0\end{bmatrix}(\omega_{Q_1'}),~
\begin{bmatrix}1&0\\0&1\end{bmatrix}(\omega_{Q_2'}),~
\begin{bmatrix}1&0\\1&1\end{bmatrix}(\omega_{Q_2'}),\quad\textrm{respectively}.
\end{equation*}
Thus, we achieve
\begin{equation*}
\mathrm{C}_2(-20)=\{
[x^2+5y^2],~[5x^2+y^2],~[3x^2-2xy+2y^2],~[7x^2-6xy+2y^2]\}.
\end{equation*}
Note that
\begin{equation*}
\widetilde{Q}_4'=\widetilde{Q}_4\left(\begin{bmatrix}
1&0\\2&1\end{bmatrix}\begin{bmatrix}x\\y\end{bmatrix}\right)=
3x^2+2xy+2y^2\sim_2\widetilde{Q_4}.
\end{equation*}
We then see by using the argument in
Remark \ref{algorithm} (iii) that
\begin{equation*}
[\widetilde{Q}_4]^{-1}=[\widetilde{Q}_4']^{-1}=[\widetilde{Q_3}].
\end{equation*}
Therefore, we conclude
\begin{equation*}
\mathrm{C}_2(-20)\simeq\mathbb{Z}/4\mathbb{Z}.
\end{equation*}
\end{example}

\begin{example}
Let $K=\mathbb{Q}(\sqrt{-5})$ and $N=6$.
Let $Q_1$ and $Q_2$ be reduced forms of discriminant $d_K=-20$
stated in Example \ref{-20}.
In this case, we let
\begin{equation*}
Q_1'=Q_1\quad\textrm{and}\quad
Q_2'=Q_2\left(\begin{bmatrix}1&-1\\1&0\end{bmatrix}\begin{bmatrix}
x\\y\end{bmatrix}\right)=7x^2-6xy+2y^2
\end{equation*}
with
\begin{equation*}
\omega_{Q_1'}=\sqrt{-5}\quad
\textrm{and}\quad
\omega_{Q_2'}=\frac{3+\sqrt{-5}}{7}.
\end{equation*}
There are
eight representatives of form classes in $\mathrm{C}_6(-20)$
with zeros
\begin{eqnarray*}
&&\begin{bmatrix}
1&0\\0&1
\end{bmatrix}(\omega_{Q_1'}),~
\begin{bmatrix}
0&-1\\1&0
\end{bmatrix}(\omega_{Q_1'}),~
\begin{bmatrix}
1&1\\2&3
\end{bmatrix}(\omega_{Q_1'}),~
\begin{bmatrix}
-1&-1\\3&2
\end{bmatrix}(\omega_{Q_1'}),\\
&&\begin{bmatrix}
1&0\\0&1
\end{bmatrix}(\omega_{Q_2'}),~
\begin{bmatrix}
0&-1\\1&3
\end{bmatrix}(\omega_{Q_2'}),~
\begin{bmatrix}
1&1\\2&3
\end{bmatrix}(\omega_{Q_2'}),~
\begin{bmatrix}
1&0\\3&1
\end{bmatrix}(\omega_{Q_2'})
\end{eqnarray*}
by Proposition \ref{list}.
Hence we attain
\begin{eqnarray*}
&&\mathrm{C}_6(-20)=
\{[x^2+5y^2],~[5x^2+y^2],~
[29x^2-26xy+6y^2],~[49x^2+34xy+6y^2],\\
&&[7x^2-6xy+2y^2],~[83x^2+48xy+7y^2],~
[107x^2-80xy+15y^2],~[43x^2-18xy+2y^2]\}.
\end{eqnarray*}
\end{example}

We close this section by
mentioning that the equivalence relation on $\mathcal{Q}_N(d_K)$ induced from the congruence subgroup
\begin{equation*}
\Gamma_0(N)=\left\{\sigma\in\mathrm{SL}_2(\mathbb{Z})~|~
\sigma\equiv\begin{bmatrix}\mathrm{*}&\mathrm{*}\\
0&\mathrm{*}\end{bmatrix}\Mod{N}\right\}
\end{equation*}
gives us a new form class group
as a quotient group of $\mathrm{C}_N(d_K)$.

\begin{remark}
Let $\mathcal{O}=[N\tau_K,\,1]$ be an order of conductor $N$ in $K$
and $\mathrm{Cl}(\mathcal{O})$ be the ideal class group
of the order $\mathcal{O}$ (\cite[p. 123]{Cox}).
Then we know that
\begin{equation*}
\mathrm{Cl}(\mathcal{O})\simeq I_K(\mathfrak{n})/P_{K,\,\mathbb{Z}}(\mathfrak{n}),
\end{equation*}
where $P_{K,\,\mathbb{Z}}(\mathfrak{n})$ is the subgroup
of $I_K(\mathfrak{n})$ consisting of
principal fractional ideals $\lambda\mathcal{O}_K$
such that
\begin{equation*}
\lambda\equiv^*m\Mod{\mathfrak{n}}\quad
\textrm{for some}~m\in\mathbb{Z}~\textrm{with}~\gcd(N,\,m)=1
\end{equation*}
(\cite[Proposition 7.22]{Cox}).
And, by the existence theorem of class field theory, there
is a unique abelian extension $H_\mathcal{O}$ of $K$
for which
\begin{equation*}
\mathrm{Gal}(H_\mathcal{O}/K)\simeq
I_K(\mathfrak{n})/P_{K,\,\mathbb{Z}}(\mathfrak{n})
\end{equation*}
via the Artin reciprocity map for $\mathfrak{n}$.
Define an equivalence relation $\sim_\mathcal{O}$ on $\mathcal{Q}_N(d_K)$ by
\begin{equation*}
Q\sim_\mathcal{O}Q'\quad\Longleftrightarrow\quad
Q'\left(\begin{bmatrix}x\\y\end{bmatrix}\right)=
Q\left(\sigma\begin{bmatrix}x\\y\end{bmatrix}\right)
~\textrm{for some}~\sigma\in\Gamma_0(N).
\end{equation*}
Let
\begin{equation*}
\mathrm{C}_\mathcal{O}(d_K)=\mathcal{Q}_N(d_K)/\sim_\mathcal{O},
\end{equation*}
and define a map
\begin{eqnarray*}
\phi_\mathcal{O}~:~\mathrm{C}_\mathcal{O}(d_K)&\rightarrow&
I_K(\mathfrak{n})/P_{K,\,\mathbb{Z}}(\mathfrak{n})\\
\left[Q\right]&\mapsto&\textrm{class containing}~
\left[\omega_Q,\,1\right].
\end{eqnarray*}
Then, in a similar way as we did in Propositions \ref{well}, \ref{injective} and \ref{surjective} one can readily show that $\phi_\mathcal{O}$ is well defined and bijective.
Therefore, we may consider the
set $\mathrm{C}_\mathcal{O}(d_K)$
as a quotient group of $\mathrm{C}_N(d_K)$, which
is isomorphic to $\mathrm{Gal}(H_\mathcal{O}/K)$.
\end{remark}

\section {The maximal abelian extension unramified outside prime ideals dividing $\mathfrak{n}$}

Let $K_\mathfrak{n}^\mathrm{ab}$ be the maximal abelian extension of $K$
unramified outside prime ideals dividing $\mathfrak{n}$ ($=N\mathcal{O}_K$).
If $N=1$, then $K_\mathfrak{n}^\mathrm{ab}$ is nothing but the Hilbert class field $H_K$ of $K$.
So, we assume $N\geq2$.
As an application,
we shall describe $\mathrm{Gal}(K_\mathfrak{n}^\mathrm{ab}/K)$
in view of extended form class groups.
Here we shall regard $\mathrm{Gal}(K_\mathfrak{n}^\mathrm{ab}/K)$
as a topological group equipped with Krull topology:
For each $\rho\in\mathrm{Gal}(K_\mathfrak{n}^\mathrm{ab}/K)$, we take
the cosets
\begin{equation*}
\rho\mathrm{Gal}(K_\mathfrak{n}^\mathrm{ab}/F)
\end{equation*}
as a basis of open neighborhoods of $\rho$, where $F$ runs through
all finite (abelian) subextensions of $K_\mathfrak{n}^\mathrm{ab}/K$
(\cite[$\S$I.1]{Neukirch}).
\par
If $L$ is a finite abelian extension of $K$
unramified outside prime ideals dividing $\mathfrak{n}$, then its conductor also
divides $\mathfrak{n}^\ell$ for some $\ell\geq1$. Thus
$L$ is contained in the ray class field $K_{\mathfrak{n}^\ell}$ (\cite[p. 116]{Shimura}), and hence
we get
\begin{equation*}
K_\mathfrak{n}^\mathrm{ab}=\bigcup_{\ell\geq1}
K_{\mathfrak{n}^\ell}.
\end{equation*}
Furthermore,
now that
\begin{equation*}
K_\mathfrak{n}\subseteq K_{\mathfrak{n}^2}\subseteq\cdots
\subseteq K_{\mathfrak{n}^\ell}\subseteq\cdots,
\end{equation*}
we attain the isomorphisms
\begin{equation}\label{inverselimit}
\mathrm{Gal}(K_\mathfrak{n}^\mathrm{ab}/K)\simeq
\varprojlim_{\ell}\mathrm{Gal}(K_{\mathfrak{n}^\ell}/K)
\simeq\varprojlim_{\ell}\mathrm{C}_{N^\ell}(d_K)
\end{equation}
of topological groups by Theorem \ref{CGisomorphism} (\cite[$\S$2 in Appendix]{Washington}).
Here, the inverse system
$\{\mathrm{C}_{N^\ell}(d_K)\}_{\ell}$
is given by the natural surjections
$\mathrm{C}_{N^n}(d_K)\leftarrow\mathrm{C}_{N^m}(d_K)$ ($1\leq n\leq m$).
And we observe
\begin{equation*}
\mathcal{Q}_{N^\ell}(d_K)=\mathcal{Q}_N(d_K)\quad\textrm{for all}~\ell\geq1.
\end{equation*}
For each $Q\in\mathcal{Q}_N(d_K)$ and $\ell\geq1$, denote by
\begin{equation*}
[Q]_{N^\ell}=\textrm{the form class containing $Q$ in}~\mathrm{C}_{N^\ell}(d_K).
\end{equation*}
Then we have
\begin{equation*}
\varprojlim_\ell\mathrm{C}_{N^\ell}(d_K)=
\left\{([Q_1]_N,\,[Q_2]_{N^2},\,\ldots,\,[Q_\ell]_{N^\ell},\,\ldots)\in\prod_{\ell}\mathrm{C}_{N^\ell}(d_K)~\bigg|~
[Q_{\ell+1}]_{N^\ell}=[Q_\ell]_{N^\ell}~\textrm{for all}~\ell\geq1\right\}.
\end{equation*}
Now, define an equivalence relation $\sim_{N^\infty}$ on the set
$\mathcal{Q}_N(d_K)$ by
\begin{equation*}
Q\sim_{N^\infty}Q'\quad\Longleftrightarrow\quad
Q\sim_{N^\ell}Q'~\textrm{for all}~\ell\geq1.
\end{equation*}
For each $Q\in\mathcal{Q}_N(d_K)$, let $[Q]_{N^\infty}$
be the form class containing $Q$ in $\mathrm{Q}_N(d_K)/\sim_{N^\infty}$.
We also define a map
\begin{eqnarray*}
\iota~:~\mathcal{Q}_N(d_K)/\sim_{N^\infty}&\rightarrow&\varprojlim_{\ell}
\mathrm{C}_{N^\ell}(d_K)\\
\mathrm{[}Q\mathrm{]}_{N^\infty}&\mapsto&([Q]_N,\,[Q]_{N^2},\,\ldots,\,[Q]_{N^\ell},\,\ldots).
\end{eqnarray*}
Then it is straightforward that $\iota$ is well defined and injective.

\begin{lemma}\label{Cclosure}
We derive
\begin{equation*}
\varprojlim_\ell\mathrm{C}_{N^\ell}(d_K)=\overline{\iota(\mathcal{Q}_N(d_K)/\sim_{N^\infty})}.
\end{equation*}
\end{lemma}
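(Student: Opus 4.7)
The plan is to unwind what density means for the Krull topology on $\varprojlim_\ell \mathrm{C}_{N^\ell}(d_K)$ and then to exploit the compatibility built into any element of the inverse limit. Each factor $\mathrm{C}_{N^\ell}(d_K)$ is finite, hence discrete and compact, and the inverse limit carries the subspace topology induced from the product. A fundamental system of open neighborhoods of a point $x=([Q_\ell]_{N^\ell})_{\ell\geq1}\in\varprojlim_\ell\mathrm{C}_{N^\ell}(d_K)$ is therefore given, for each $m\geq1$, by
\begin{equation*}
U_m(x)=\left\{([Q'_\ell]_{N^\ell})_\ell\in\varprojlim_\ell\mathrm{C}_{N^\ell}(d_K)~\bigg|~[Q'_\ell]_{N^\ell}=[Q_\ell]_{N^\ell}~\textrm{for}~\ell=1,2,\ldots,m\right\}.
\end{equation*}
So the task reduces to producing, for each $x$ and each $m$, a single form $Q\in\mathcal{Q}_N(d_K)$ with $\iota([Q]_{N^\infty})\in U_m(x)$.

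First I would note that, for any fixed $m\geq1$, the form $Q:=Q_m$ itself is a candidate. Indeed, $\iota([Q_m]_{N^\infty})=([Q_m]_N,[Q_m]_{N^2},\ldots,[Q_m]_{N^\ell},\ldots)$, and I must only verify that its first $m$ coordinates agree with those of $x$. The $m$-th coordinate is $[Q_m]_{N^m}$ by construction, and for $k<m$ the compatibility condition built into the inverse limit gives
\begin{equation*}
[Q_m]_{N^k}=[Q_{m-1}]_{N^k}=\cdots=[Q_{k+1}]_{N^k}=[Q_k]_{N^k},
\end{equation*}
where each equality uses the natural projection $\mathrm{C}_{N^{j+1}}(d_K)\twoheadrightarrow\mathrm{C}_{N^j}(d_K)$ from Remark \ref{algorithm} (i). Thus $\iota([Q_m]_{N^\infty})\in U_m(x)$, which shows $x\in\overline{\iota(\mathcal{Q}_N(d_K)/\sim_{N^\infty})}$.

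For the reverse inclusion I would simply recall that $\varprojlim_\ell\mathrm{C}_{N^\ell}(d_K)$ is a closed subset of $\prod_\ell\mathrm{C}_{N^\ell}(d_K)$ (being cut out by the closed compatibility conditions $[Q_{\ell+1}]_{N^\ell}=[Q_\ell]_{N^\ell}$), so any closure taken inside this inverse limit stays inside it; since $\iota$ already maps into the inverse limit, the closure of its image is contained in the inverse limit. Combining the two inclusions yields the claimed equality.

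There is no real obstacle: the entire content is the observation that an element of the inverse limit is tautologically approximated, at level $m$, by the single form $Q_m$, and that this is exactly what density in the Krull topology requires. The only points needing care are (a) stating the basis of open sets correctly so that the argument proceeds coordinate by coordinate, and (b) invoking the compatibility $[Q_{m}]_{N^k}=[Q_k]_{N^k}$ for $k<m$, which is where the inverse-system projections of Remark \ref{algorithm} (i) enter. No new construction on the form-class side is required.
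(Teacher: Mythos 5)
Your proposal is correct and follows essentially the same route as the paper: given an element $([Q_\ell]_{N^\ell})_\ell$ of the inverse limit, approximate it at level $m$ by $\iota([Q_m]_{N^\infty})$, whose first $m$ coordinates agree with the given element by the compatibility conditions of the inverse system. You merely spell out more explicitly the basic open sets and the (trivial) reverse inclusion, which the paper leaves implicit.
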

\begin{proof}
Let $\displaystyle([Q_1]_N,\,[Q_2]_{N^2},\,\ldots,\,[Q_\ell]_{N^\ell},\,\ldots)
\in\varprojlim_{\ell}\mathrm{C}_{N^\ell}(d_K)$ be given.
For every $\ell\geq1$, we see that
\begin{eqnarray*}
\iota([Q_\ell]_{N^\infty})&=&([Q_\ell]_N,\,[Q_\ell]_{N^2},\,\ldots,\,[Q_\ell]_{N^\ell},\,
[Q_\ell]_{N^{\ell+1}},\,\ldots)\\
&=&([Q_1]_N,\,[Q_2]_{N^2},\,\ldots,\,[Q_\ell]_{N^\ell},\,
[Q_\ell]_{N^{\ell+1}},\,\ldots).
\end{eqnarray*}
Considering the Krull topology on $\mathrm{Gal}(K_\mathfrak{n}^\mathrm{ab}/K)$
we conclude that $\iota(\mathcal{Q}_N(d_K)/\sim_{N^\infty})$ is
a dense subset of $\displaystyle\varprojlim_{\ell}\mathrm{C}_{N^\ell}(d_K)$.
\end{proof}

For $T=\begin{bmatrix}1&1\\0&1\end{bmatrix}$, let us
define a new equivalence relation $\sim_T$
on $\mathcal{Q}_N(d_K)$ by
\begin{equation*}
Q\sim_T Q'\quad\Longleftrightarrow\quad
Q'\left(\begin{bmatrix}x\\y\end{bmatrix}\right)=
Q\left(\sigma\begin{bmatrix}x\\y\end{bmatrix}\right)~\textrm{for some}~
\sigma\in\langle -I_2,\,T\rangle.
\end{equation*}

\begin{lemma}\label{samerelation}
Two equivalence relations
$\sim_{N^\infty}$ and $\sim_T$
are the same.
\end{lemma}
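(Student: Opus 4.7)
The plan is to prove each inclusion of equivalence relations separately. The easy direction is $\sim_T \Rightarrow \sim_{N^\infty}$: since $T = \left[\begin{smallmatrix} 1 & 1 \\ 0 & 1\end{smallmatrix}\right]$ and $-I_2$ both clearly lie in $\pm\Gamma_1(N^\ell)$ for every $\ell \geq 1$, the subgroup $\langle -I_2, T\rangle$ is contained in $\bigcap_{\ell \geq 1}\pm\Gamma_1(N^\ell)$, so any equivalence under $\sim_T$ is also an equivalence under every $\sim_{N^\ell}$, hence under $\sim_{N^\infty}$.

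The nontrivial direction is $\sim_{N^\infty} \Rightarrow \sim_T$. Suppose $Q \sim_{N^\ell} Q'$ for every $\ell\geq 1$, so for each $\ell$ we have some $\sigma_\ell \in \pm\Gamma_1(N^\ell)$ with $Q' = Q \circ \sigma_\ell$. The first key step is to remove the $\ell$-dependence: fix $\sigma_1 \in \pm\Gamma_1(N)$ giving the equivalence at level $N$; then $\sigma_\ell \sigma_1^{-1}$ preserves $Q$, so it lies in the stabilizer of $Q$ in $\mathrm{SL}_2(\mathbb{Z})$. Since $K \neq \mathbb{Q}(\sqrt{-1}),\mathbb{Q}(\sqrt{-3})$, the unit group $\mathcal{O}_K^* = \{\pm 1\}$ implies this stabilizer is exactly $\{\pm I_2\}$. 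Hence $\sigma_\ell = \pm\sigma_1$, and absorbing the sign into the $\pm$ of $\pm\Gamma_1(N^\ell)$ we conclude
\begin{equation*}
\sigma_1 \in \bigcap_{\ell \geq 1} \pm\Gamma_1(N^\ell).
\end{equation*}

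The second step analyzes this intersection. Write $\sigma_1 = \left[\begin{smallmatrix} a & b \\ c & d\end{smallmatrix}\right]$. For each $\ell$, either $\sigma_1 \equiv \left[\begin{smallmatrix} 1 & * \\ 0 & 1\end{smallmatrix}\right] \Mod{N^\ell}$ or $\sigma_1 \equiv \left[\begin{smallmatrix} -1 & * \\ 0 & -1\end{smallmatrix}\right] \Mod{N^\ell}$. One of these two cases must hold for infinitely many $\ell$ (in fact, for all sufficiently large $\ell$, since if $a \neq 1$ then $a \not\equiv 1 \Mod{N^\ell}$ once $N^\ell > |a-1|$, and similarly for $a \not\equiv -1$). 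Replacing $\sigma_1$ by $-\sigma_1$ if necessary, we may assume the $+$ case holds for all sufficiently large $\ell$. Then $a \equiv 1$, $d \equiv 1$, $c \equiv 0 \Mod{N^\ell}$ for arbitrarily large $\ell$, forcing $a = d = 1$ and $c = 0$. Therefore $\sigma_1 = \left[\begin{smallmatrix} 1 & b \\ 0 & 1\end{smallmatrix}\right] = T^b \in \langle -I_2, T\rangle$, which gives $Q \sim_T Q'$.

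The main subtlety is handling the sign ambiguity in the definition of $\pm\Gamma_1(N^\ell)$, which could in principle oscillate with $\ell$; the resolution is precisely that for large $\ell$ the integer entries can only reduce to one sign, so a single global choice of sign is forced. Beyond that, everything reduces to the automorphism-group computation $\{\sigma \in \mathrm{SL}_2(\mathbb{Z}) : Q\circ\sigma = Q\} = \{\pm I_2\}$, which is exactly where the hypothesis $K \neq \mathbb{Q}(\sqrt{-1}),\mathbb{Q}(\sqrt{-3})$ is used, mirroring its role throughout the paper.
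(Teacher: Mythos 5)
Your proof is correct and takes essentially the same route as the paper's: the easy inclusion is identical, and for the converse both arguments fix $\sigma_1$, use $\mathrm{Stab}(Q)=\{\pm I_2\}$ (valid because $K\neq\mathbb{Q}(\sqrt{-1}),\mathbb{Q}(\sqrt{-3})$) to get $\sigma_\ell=\pm\sigma_1$, and conclude $\sigma_1\in\bigcap_{\ell\geq1}\pm\Gamma_1(N^\ell)=\langle -I_2,\,T\rangle$. The only difference is that you explicitly verify this last intersection (handling the possible sign oscillation and using $N\geq2$ so that $N^\ell\to\infty$), a step the paper merely asserts.
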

\begin{proof}
Let $Q(x,\,y)=ax^2+bxy+cy^2$
and $Q'(x,\,y)=a'x^2+b'xy+c'y^2$ be two
elements of $\mathcal{Q}_N(d_K)$.
Since $\langle -I_2,\,T\rangle$ is contained in $\pm\Gamma_1(N^\ell)$
for all $\ell\geq1$, it is immediate that
if $Q\sim_T Q'$, then $Q\sim_{N^\infty} Q'$.
\par
Conversely, assume that $Q\sim_{N^\infty}Q'$. Then, for each $\ell\geq1$,
there is $\sigma_\ell\in\pm\Gamma_1(N^\ell)$ such that
\begin{equation*}
Q'\left(\begin{bmatrix}x\\y\end{bmatrix}\right)=
Q\left(\sigma_\ell\begin{bmatrix}x\\y\end{bmatrix}\right).
\end{equation*}
Hence it follows from
\begin{equation*}
Q\left(\sigma_1\begin{bmatrix}x\\y\end{bmatrix}\right)=
Q\left(\sigma_\ell\begin{bmatrix}x\\y\end{bmatrix}\right)
\end{equation*}
that
\begin{equation*}
Q\left(\sigma_1\sigma_\ell^{-1}\begin{bmatrix}x\\y\end{bmatrix}\right)
=Q\left(\begin{bmatrix}x\\y\end{bmatrix}\right),
\end{equation*}
which yields that $\sigma_1\sigma_\ell^{-1}$ belongs to
the stabilizer subgroup $\mathrm{Stab}(Q)$ ($\subseteq\mathrm{SL}_2(\mathbb{Z})$) of $Q$.
Since we are assuming $K\neq\mathbb{Q}(\sqrt{-1}),\,\mathbb{Q}(\sqrt{-3})$,
$\mathrm{Stab}(Q)=\{I_2,\,-I_2\}$; and hence
$\sigma_1=\sigma_\ell$ or $\sigma_1=-\sigma_\ell$.
Owing to the assumption $N\geq2$ we achieve
\begin{equation*}
\sigma_1\in\bigcap_{\ell\geq1}\pm\Gamma_1(N^\ell)=\langle -I_2,\,T\rangle.
\end{equation*}
Therefore, we conclude $Q\sim_T Q'$.
\end{proof}

\begin{lemma}\label{Trepresent}
Let $Q(x,\,y)=ax^2+bxy+cy^2$ and
$Q'(x,\,y)=a'x^2+b'xy+c'y^2$
be two elements of $\mathcal{Q}_N(d_K)$. Then,
\begin{equation*}
Q\sim_T Q'\quad\Longleftrightarrow\quad
a=a'~\textrm{and}~a~\textrm{divides}~\frac{b-b'}{2}.
\end{equation*}
\end{lemma}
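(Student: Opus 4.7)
The plan is to compute the $\langle -I_2,\,T\rangle$-action on $Q$ explicitly and compare coefficients. Every element of $\langle -I_2,\,T\rangle$ has the form $\pm T^n$ with $n\in\mathbb{Z}$ because $-I_2$ is central, and since a quadratic form is insensitive to an overall sign on its arguments, a direct expansion gives
\begin{equation*}
Q\!\left(\pm T^n\begin{bmatrix}x\\y\end{bmatrix}\right)
= Q(x+ny,\,y)
= ax^2 + (b+2an)xy + (an^2+bn+c)y^2.
\end{equation*}
The forward direction is then immediate by reading off coefficients: if $Q\sim_T Q'$, there exists $n\in\mathbb{Z}$ with $a'=a$, $b'=b+2an$ and $c'=an^2+bn+c$, whence $a=a'$ and $a$ divides $(b-b')/2$.

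For the converse, assume $a=a'$ and $a\mid(b-b')/2$. First I would observe that $(b-b')/2$ is automatically an integer, because $b^2\equiv d_K\equiv b'^2\Mod{4}$ together with the fact that $d_K\equiv 0$ or $1\Mod{4}$ forces $b$ and $b'$ to have the same parity. I then set $n=(b'-b)/(2a)\in\mathbb{Z}$; by construction, the first two coefficients of $Q\circ T^n$ are exactly $a$ and $b'$. The $y^2$-coefficient must also match because $Q$ and $Q'$ share the discriminant $d_K$: combining $4a(c'-c)=b'^2-b^2$ with
\begin{equation*}
b'^2-b^2=(b'-b)(b'+b)=2an\,(2b+2an)=4a(bn+an^2)
\end{equation*}
yields $c'=an^2+bn+c$, so $Q'=Q\circ T^n$ and hence $Q\sim_T Q'$.

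I do not anticipate any real obstacle here. The lemma is essentially the observation that the $T^n$-action translates the middle coefficient by $2an$ and leaves the leading coefficient fixed, after which the common-discriminant constraint automatically pins down the constant coefficient. The only mild subtlety worth flagging explicitly is the parity check that makes $(b-b')/2$ an integer to begin with.
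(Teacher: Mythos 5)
Your proof is correct and follows essentially the same route as the paper: expand $Q\circ(\pm T^{n})$, match the first two coefficients, and use the shared discriminant to force the $y^{2}$-coefficients to agree, with the parity of $b$ and $b'$ checked via $b^{2}\equiv b'^{2}\equiv d_{K}\Mod{4}$. No gaps.
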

\begin{proof}
Observe that $b$ and $b'$ have the same parity
by the discriminant condition
\begin{equation}\label{d_K}
b^2-4ac=b'^2-4a'c'=d_K.
\end{equation}
We then see that
\begin{eqnarray*}
Q\sim_T Q'&\Longleftrightarrow&Q'\left(\begin{bmatrix}x\\y\end{bmatrix}\right)
=Q\left(\begin{bmatrix}1&s\\0&1\end{bmatrix}\begin{bmatrix}x\\y\end{bmatrix}\right)~
\textrm{for some}~s\in\mathbb{Z}\\
&\Longleftrightarrow&
a'x^2+b'xy+c'y^2=ax^2+(2ax+b)xy+(a^2s+bs+c)y^2~
\textrm{for some}~s\in\mathbb{Z}\\
&\Longleftrightarrow&a'=a~\textrm{and}~b'=2as+b~\textrm{for some}~s\in\mathbb{Z}~
\textrm{by (\ref{d_K})}\\
&\Longleftrightarrow&a=a'~\textrm{and}~a~\textrm{divides}~(b-b')/2.
\end{eqnarray*}
\end{proof}

\begin{theorem}\label{closure}
The set $\mathcal{Q}_N(d_K)/\sim_T$
can be viewed as a dense subset of $\mathrm{Gal}(K_\mathfrak{n}^\mathrm{ab}/K)$.
\end{theorem}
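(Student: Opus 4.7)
The plan is to show that Theorem \ref{closure} is essentially a bookkeeping assembly of the three preceding lemmas together with the topological identification (\ref{inverselimit}). Concretely, I would argue that the composition
\begin{equation*}
\mathcal{Q}_N(d_K)/\sim_T \;\xrightarrow{\;=\;}\; \mathcal{Q}_N(d_K)/\sim_{N^\infty} \;\xrightarrow{\;\iota\;}\; \varprojlim_\ell \mathrm{C}_{N^\ell}(d_K) \;\xrightarrow{\;\simeq\;}\; \mathrm{Gal}(K_\mathfrak{n}^\mathrm{ab}/K)
\end{equation*}
realizes $\mathcal{Q}_N(d_K)/\sim_T$ as the desired dense subset.

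First, I would invoke Lemma \ref{samerelation} to replace $\sim_T$ by $\sim_{N^\infty}$ on the set $\mathcal{Q}_N(d_K)$; this is where the assumption $N \ge 2$ and $K \ne \mathbb{Q}(\sqrt{-1}),\,\mathbb{Q}(\sqrt{-3})$ is essential, since the proof of that lemma used $\mathrm{Stab}(Q) = \{\pm I_2\}$ and $\bigcap_{\ell\ge 1}\pm\Gamma_1(N^\ell) = \langle -I_2,\,T\rangle$. Once the two equivalence relations are identified, the map $\iota$ defined just before Lemma \ref{Cclosure} gives a well-defined and injective map $\mathcal{Q}_N(d_K)/\sim_T \hookrightarrow \varprojlim_\ell \mathrm{C}_{N^\ell}(d_K)$.

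Next, I would apply Lemma \ref{Cclosure} to conclude that the image of $\iota$ is dense in $\varprojlim_\ell \mathrm{C}_{N^\ell}(d_K)$ with respect to the inverse-limit topology, which coincides with the Krull topology on $\mathrm{Gal}(K_\mathfrak{n}^\mathrm{ab}/K)$ once transported via the isomorphisms in (\ref{inverselimit}); these isomorphisms are isomorphisms of topological groups because Theorem \ref{CGisomorphism} identifies each finite $\mathrm{C}_{N^\ell}(d_K)$ with $\mathrm{Gal}(K_{\mathfrak{n}^\ell}/K)$ compatibly with the natural projections. Composing with this topological isomorphism yields the desired dense embedding of $\mathcal{Q}_N(d_K)/\sim_T$ into $\mathrm{Gal}(K_\mathfrak{n}^\mathrm{ab}/K)$.

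There is essentially no obstacle here beyond verifying that the projections $\mathrm{C}_{N^m}(d_K) \to \mathrm{C}_{N^n}(d_K)$ correspond under Theorem \ref{CGisomorphism} to the restriction maps $\mathrm{Gal}(K_{\mathfrak{n}^m}/K) \to \mathrm{Gal}(K_{\mathfrak{n}^n}/K)$ — which follows from Remark \ref{algorithm} (i) combined with naturality of the Artin reciprocity map — so that the inverse-limit topology really matches the Krull topology. Optionally, I would mention Lemma \ref{Trepresent} as providing an explicit and easily computable list of representatives of $\mathcal{Q}_N(d_K)/\sim_T$ (namely, one form $ax^2+bxy+cy^2$ per pair $(a,b)$ with $b$ taken modulo $2a$, subject to $\gcd(N,a)=1$), which makes the dense subset usable in practice.
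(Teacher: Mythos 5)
Your proposal is correct and follows essentially the same route as the paper: it composes $\iota$ with the isomorphism from (\ref{inverselimit}), invokes Lemma \ref{samerelation} to identify $\sim_T$ with $\sim_{N^\infty}$, Lemma \ref{Cclosure} for density, and Lemma \ref{Trepresent} to distinguish classes. The extra remarks you add on the compatibility of the projections with restriction maps and on explicit representatives are accurate but not needed beyond what the paper already records.
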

\begin{proof}
Let
\begin{equation*}
\phi:\varprojlim_{\ell}\mathrm{C}_{N^\ell}(d_K)
\rightarrow\mathrm{Gal}(K_\mathfrak{n}^\mathrm{ab}/K)
\end{equation*}
be the isomorphism obtained in (\ref{inverselimit}). Then we get
by Lemmas \ref{Cclosure} and \ref{samerelation}
\begin{equation*}
\mathrm{Gal}(K_\mathfrak{n}^\mathrm{ab}/K)
=\overline{(\phi\circ\iota)(\mathcal{Q}_N(d_K)/\sim_T)}.
\end{equation*}
Moreover, Lemma \ref{Trepresent} enables us
to distinguish
different classes in $\mathcal{Q}_N(d_K)/\sim_T$
from one another.
\end{proof}

\bibliographystyle{amsplain}

\address{
Department of Mathematics Education\\
Dongguk University-Gyeongju\\
Gyeongju-si, Gyeongsangbuk-do 38066\\
Republic of Korea} {zandc@dongguk.ac.kr}
\address{
Department of Mathematical Sciences \\
KAIST \\
Daejeon 34141\\
Republic of Korea} {jkkoo@math.kaist.ac.kr}
\address{
Department of Mathematics\\
Hankuk University of Foreign Studies\\
Yongin-si, Gyeonggi-do 17035\\
Republic of Korea} {dhshin@hufs.ac.kr}

\end{document}